\documentclass[12pt]{amsart}
\usepackage{amssymb}
\usepackage{amscd}
\usepackage{amsmath}
\usepackage{amsfonts}
\usepackage{latexsym}
\usepackage[all]{xy}
\usepackage{verbatim}
\usepackage[margin=1in]{geometry}

\newtheorem{theorem}{Theorem}[section]
\newtheorem{lemma}[theorem]{Lemma}
\newtheorem{proposition}[theorem]{Proposition}
\newtheorem{corollary}[theorem]{Corollary} 
\theoremstyle{definition}  
\newtheorem{definition}[theorem]{Definition}
\newtheorem{example}[theorem]{Example}
\newtheorem{conjecture}[theorem]{Conjecture}  

\newtheorem{remark}[theorem]{Remark}

\newcommand{\id}{\text{id}}

\newcommand{\Fun}{\text{Fun}}
\newcommand{\FPdim}{\text{FPdim}} 

\renewcommand{\Vec}{\operatorname{\operatorname{\mathsf{Vec}}}}

\DeclareMathOperator{\Ind}{\operatorname{\mathsf{Ind}}}
\DeclareMathOperator{\Rep}{\operatorname{\mathsf{Rep}}}
\DeclareMathOperator{\Hom}{\operatorname{\mathsf{Hom}}}
\DeclareMathOperator{\Alt}{\operatorname{\mathsf{Alt}}}

\newcommand{\rev}{\text{rev}}

\newcommand{\op}{\text{op}}
\newcommand{\sgn}{\text{sgn}}

\newcommand{\eps}{\varepsilon}

\newcommand{\B}{\mathcal{B}}
\newcommand{\C}{\mathcal{C}}
\newcommand{\D}{\mathcal{D}}
\newcommand{\E}{\mathcal{E}}
\newcommand{\F}{\mathcal{F}}
\newcommand{\Z}{\mathcal{Z}}

\newcommand{\M}{\mathcal{M}}

\renewcommand{\O}{\mathcal{O}}

\newcommand{\be}{\mathbf{1}}

\renewcommand{\be}{\mathbf{1}}

\newcommand{\bt}{\boxtimes}
\newcommand{\ot}{\otimes}

\newcommand{\beq}{\begin{equation}}
\newcommand{\eeq}{\end{equation}}

\newcommand{\bpf}{\begin{proof}}
\newcommand{\epf}{\end{proof}}

\newcommand{\bth}{\begin{theorem}}
\renewcommand{\eth}{\end{theorem}}
\newcommand{\bpr}{\begin{proposition}}
\newcommand{\epr}{\end{proposition}}
\newcommand{\ble}{\begin{lemma}}
\newcommand{\ele}{\end{lemma}}
\newcommand{\bco}{\begin{corollary}}
\newcommand{\eco}{\end{corollary}}
\newcommand{\bde}{\begin{definition}}
\newcommand{\ede}{\end{definition}}
\newcommand{\bex}{\begin{example}}
\newcommand{\eex}{\end{example}}
\newcommand{\bre}{\begin{remark}}
\newcommand{\ere}{\end{remark}}
\newcommand{\bcj}{\begin{conjecture}}
\newcommand{\ecj}{\end{conjecture}}


\hyphenation{theo-re-ti-cal group-theo-re-ti-cal
semi-sim-ple al-geb-ras di-men-sions sim-ple ob-jects
equi-va-lent pro-per-ties ca-te-go-ries ques-tion mo-dule
e-print auto-equi-valence equi-va-ri-an-ti-za-tion}

\begin{document}
\title[Classifying braidings on  fusion categories]
{Classifying braidings on  fusion  categories}
\author{Dmitri Nikshych}
\address{Department of Mathematics and Statistics,
University of New Hampshire,  Durham, NH 03824, USA}
\email{dmitri.nikshych@unh.edu}

\begin{abstract}
We show that braidings on a fusion category $\C$ correspond to certain fusion subcategories of the center of $\C$
transversal to the canonical Lagrangian algebra. This allows to classify braidings
on  non-degenerate braided fusion categories and on those dual to the categories of group-graded vector spaces. 
\end{abstract}  

\maketitle
\baselineskip=18pt


\section{Introduction}

Throughout this article we work over an algebraically closed field $k$ of characteristic $0$.  

In general, a fusion category $\C$ may have several different braidings or no braidings at all. 
For example, if $\C=\Vec_G$, the category of finite-dimensional $k$-vectors spaces graded by a finite abelian group $G$,
then braidings on $\C$ are parameterized by bilinear forms on $G$. If $G$ is non-Abelian then of course $\Vec_G$ does not
admit any braidings. 

The goal of this note is to give a convenient parameterization of braidings on an arbitrary fusion category $\C$. 
We introduce the notion of transversality between algebras and subcategories of a braided fusion category.  Then we show
that  the set of braidings on $\C$ is in bijection with the set of fusion subcategories $\B$ of the center $\Z(\C)$ such that
$\FPdim(\B)=\FPdim(\C)$ and $\B$ is transversal to the canonical Lagrangian algebra of $\Z(\C)$. In several 
interesting situations it is possible to give an explicit parameterization of such subcategories. 
We do this in two cases: (1) for fusion categories $\C$ admitting a non-degenerate braiding and (2) for group-theoretical
categories. In the latter case the parameterization is given in terms of the subgroup lattice of a group and can be conveniently
used in concrete computations. 

The paper is organized as follows.  Section~\ref{sect 2} contains some background information and a categorical
analogue of Goursat's lemma (Theorem~\ref{Goursat}) for subcategories of tensor products of fusion categories.
In Section~\ref{sect 3} we introduce transversal pairs of algebras and subcategories and characterize braidings in 
these terms.  In Section~\ref{sect 4} we classify braidings on a fusion category $\B$ that already admits a non-degenerate braiding
(Theorem~\ref{braidings on non-degenerate}) and consider several examples.  
We show that with respect to any other braiding the symmetric center of $\B$
remains pointed. In Section~\ref{sect 5} we classify braidings on group-theoretical fusion categories (dual to the category $\Vec_G$).
As an application we parameterize  braidings on the Drinfeld center of $\Vec_G$.

{\bf Acknowledgements.} 
The author thanks Costel-Gabriel Bontea and Alexei Davydov for useful discussions. The author is grateful to the referee
for her or his useful comments.  This  material is based upon work supported  by  the  National  Science  Foundation  under  
grant DMS-1801198 and  by the NSA grant H98230-16-1-0008.


\section{Preliminaries}
\label{sect 2}

\subsection{Fusion categories}

We refer the reader to \cite{EGNO} for a general  theory of tensor categories and to \cite{DGNO} for braided fusion categories.

A {\em fusion category} over $k$ is a $k$-linear semisimple rigid tensor category with finitely many isomorphism
classes of simple objects, finite-dimensional $\Hom$-spaces, and a simple unit object $\be$.  By a {\em fusion subcategory} 
of a fusion category $\C$ we always mean a full tensor subcategory. An example of subcategory is the maximal pointed subcategory 
$\C_{pt}\subset \C$ generated by invertible objects of $\C$.  We say that $\C$ is {\em pointed} if $\C=\C_{pt}$.

We denote $\Vec$ the fusion category of finite-dimensional $k$-vector spaces.

For a fusion category $\C$ let $\O(\C)$ denote the set of isomorphism classes of simple objects. 

Let $G$ be a finite group.
  A grading of $\C$ by $G$ is a map $\deg :\O(\C)\to G$ with the following property:  for any simple objects 
  $X,\,Y,\,Z\in \C$ such that $X\ot Y$ contains $Z$ one has $\deg\,Z = \deg\, X \cdot \deg\, Y$.  We will identify a grading
  with the corresponding decomposition
  \begin{equation}
  \label{grading}
  \C =\bigoplus_{g\in G}\, \C_g,
  \end{equation}
  where $\C_g$ is the full additive subcategory of $\C$ generated by simple objects of degree $g\in G$.  The
  subcategory $\C_1$ is called the {\em trivial component} of the grading.
 The grading is called {\em faithful} if $\deg :\O(\C)\to G$ is surjective.
 
 
 For any fusion category $\C$  there is a {\em universal grading} $\O(\C)\to U(\C)$ \cite{GN}, where $U(\C)$ is the {\em universal grading group}
 of $\C$. Any grading of $\C$ comes from a  quotient of $U(\C)$. The trivial component of the universal grading
 is the adjoint fusion subcategory $\C_{ad} \subset \C$ generated by objects $X\ot X^*,\, X\in \O(\C)$.

\subsection{Fiber products  of fusion categories}
 
 Let $\C,\, \D$ be fusion categories graded by the same group $G$.  The {\em fiber product} of $\C$ and $\D$
 is the  fusion category
 \begin{equation}
 \label{fiber product}
 \C \bt_G \D := \bigoplus_{g\in G}\, \C_g \bt \D_g. 
 \end{equation}
 Here $\bt$ denotes Deligne's tensor product of abelian categories. 
Clearly, $\C \bt_G \D$ is a fusion subcategory of $\C \bt \D$ graded by $G$. 
The trivial component of this grading is $\C_1\bt \D_1$. 
When the gradings of $\C$ and $\D$ are faithful
one has
\begin{equation}
\label{FPdim fiber}
\FPdim(\C \bt_G \D)=  \frac{\FPdim(\C) \FPdim(\D)}{|G|}.
\end{equation}

\subsection{Goursat's Lemma for subcategories of the tensor product }

Let $\C,\, \D$ be fusion categories. 

\begin{definition}
\label{subcategory datum}
A {\em subcategory datum} for $\C \bt \D$ consists of a pair $\E \subset \C$ and $\F \subset \D$ 
of fusion subcategories, a group $G$, and  fixed faithful gradings of $\E$ and $\F$ by $G$.  
\end{definition}

We will identify subcategory data  $(\E,\, \F,\, G)$ and $(\E,\, \F,\, G')$ if there is an isomorphism
$\alpha:G \xrightarrow{\sim}  G'$ such that $\E_g =\E_{\alpha(g)}$ and $\F_g =\F_{\alpha(g)}$.
When no confusion
is likely we will denote a subcategory datum simply by $(\E,\, \F,\, G)$ omitting the grading maps. 

Given a  subcategory datum  $(\E,\, \F,\, G)$ we can form a fusion subcategory 
\begin{equation}
\label{S(E,F)}
\mathcal{S}(\E,\, \F,\, G) := \E \bt_G \F \subset \C \bt \D. 
\end{equation}


It turns out that $\mathcal{S}(\E,\, \F,\, G)$ is a typical example of a fusion subcategory of $\E \bt \F$. 
The following theorem is a categorical analogue of the well known Goursat's Lemma in group theory. 


\begin{theorem}
\label{Goursat}
Let $\C,\, \D$ be fusion categories. 
The assignment
\begin{equation}
\label{subcategory S}
(\E,\, \F,\, G) \mapsto \mathcal{S}(\E,\, \F,\, G) 
\end{equation}
is a bijection between the set  of subcategory data for $\C\bt\D$ and the set of fusion subcategories of $\C \bt \D$. 
\end{theorem}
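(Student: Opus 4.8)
The plan is to translate everything into combinatorics of simple objects. Since $\C,\D$ are fusion categories, the simple objects of $\C\bt\D$ are exactly $X\bt Y$ with $X\in\O(\C)$, $Y\in\O(\D)$, and $\Hom_{\C\bt\D}(X\bt Y,\,X'\bt Y')=\Hom_\C(X,X')\ot\Hom_\D(Y,Y')$. Thus a fusion subcategory $\mathcal{K}\subset\C\bt\D$ is the same as a subset $S=\O(\mathcal{K})\subset\O(\C)\times\O(\D)$ that contains $(\be,\be)$, is stable under $(X,Y)\mapsto(X^*,Y^*)$, and is \emph{strongly} multiplicatively closed: whenever $(X,Y),(X',Y')\in S$, every pair $(Z,W)$ with $Z$ a constituent of $X\ot X'$ and $W$ a constituent of $Y\ot Y'$ lies in $S$ (this is forced because all such $Z\bt W$ occur in $(X\bt Y)\ot(X'\bt Y')$). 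That $(\E,\F,G)\mapsto\mathcal{S}(\E,\F,G)$ lands among fusion subcategories is already recorded in the discussion of fiber products, so it remains to prove surjectivity (every such $S$ arises) and injectivity (the datum is recoverable from $S$).

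For surjectivity, start from an arbitrary $S$ and define $\E\subset\C$ and $\F\subset\D$ by $\O(\E)=\{X:\exists Y,(X,Y)\in S\}$ and $\O(\F)=\{Y:\exists X,(X,Y)\in S\}$, the two projections; strong closure makes these the simple sets of honest fusion subcategories. Define further $\O(\E_1)=\{X:(X,\be)\in S\}$ and $\O(\F_1)=\{Y:(\be,Y)\in S\}$, which are again fusion subcategories, and I claim $\E_{ad}\subseteq\E_1$: for $X\in\O(\E)$ choose $Y$ with $(X,Y)\in S$; then $(X,Y),(X^*,Y^*)\in S$ and, taking $\be$ as a constituent of $Y\ot Y^*$, strong closure puts every constituent of $X\ot X^*$ into $\E_1$. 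By the universal grading correspondence of \cite{GN}, a fusion subcategory containing $\E_{ad}$ is the trivial component of a grading; hence $\E_1$ is the trivial component of a faithful grading of $\E$ by $G_\E:=U(\E)/H_\E$, and symmetrically $\F_1$ gives a faithful grading of $\F$ by $G_\F$.

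Now consider $\Gamma=\{(\deg_{G_\E}X,\deg_{G_\F}Y):(X,Y)\in S\}\subseteq G_\E\times G_\F$. Strong closure, stability under duals, and $(\be,\be)\in S$ make $\Gamma$ a subgroup, and faithfulness of the two gradings makes both projections of $\Gamma$ surjective. The kernel of $\Gamma\to G_\E$ is trivial: if $(X,Y)\in S$ with $X\in\E_1$ then $(X,\be)\in S$, so $(X^*,\be)\in S$, and since $\be$ is a constituent of $X^*\ot X$, multiplying $(X^*,\be)$ by $(X,Y)$ yields $(\be,Y)\in S$, i.e. $Y\in\F_1$; the symmetric argument handles $\Gamma\to G_\F$. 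Thus $\Gamma$ is the graph of an isomorphism $\phi:G_\E\xrightarrow{\sim}G_\F$. Set $G:=G_\E$ and regrade $\F$ by $G$ via $\phi$. The final ingredient is a transport lemma: if $(X_0,Y_0)\in S$ and $\deg X=\deg X_0$, then $(X,Y_0)\in S$ — indeed $X\ot X_0^*$ has degree $0$, so all its constituents lie in $\E_1$, whence $X$ is a constituent of $Z\ot X_0$ for some $Z\in\O(\E_1)$, and multiplying $(Z,\be)\in S$ by $(X_0,Y_0)$ produces $(X,Y_0)\in S$; the symmetric statement transports the second coordinate. Combining transport with the graph description of $\Gamma$ gives $S=\{(X,Y):\deg_{G}X=\deg_{G}Y\}=\O(\mathcal{S}(\E,\F,G))$, proving surjectivity.

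Finally, all of the data were reconstructed from $\mathcal{K}$ by canonical recipes: $\E,\F$ are the projections of $S$, the trivial components $\E_1,\F_1$ are the partners of the unit, and the grading group together with the pairing $\E_g\leftrightarrow\F_g$ is recovered from the classes $X\sim X'\iff\{Y:(X,Y)\in S\}=\{Y:(X',Y)\in S\}$ and the rule that $\E_g$ pairs with the unique $\F_h$ with $\E_g\times\F_h\subseteq S$. Since this reconstruction is exactly inverse to $(\E,\F,G)\mapsto\mathcal{S}(\E,\F,G)$ up to the declared identification of data by a group isomorphism $\alpha$, the map is injective, and with surjectivity it is a bijection. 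I expect the main obstacle to be the surjectivity half — specifically verifying that $\Gamma$ is the graph of an isomorphism and proving the transport lemma — since both rely on extracting the unit object out of $X^*\ot X$ and $Y\ot Y^*$ and on the strong closure property that forces all constituents of a tensor product into $S$.
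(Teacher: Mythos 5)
Your overall route is essentially the paper's: define $\E$ and $\F$ as the two projections of the subcategory, cut out the ``partner of the unit'' subcategories $\E_1$ and $\F_1$, show they contain the adjoint subcategories, extract faithful gradings, and run a Goursat graph argument; your transport lemma is a welcome addition, since it makes explicit the step the paper compresses into ``This means that $\mathcal{S}$ is a fiber product.'' However, there is one genuine gap: the assertion ``by the universal grading correspondence of \cite{GN}, a fusion subcategory containing $\E_{ad}$ is the trivial component of a grading'' is false as a general principle. The correspondence in \cite{GN} says that fusion subcategories of $\E$ containing $\E_{ad}$ correspond to \emph{arbitrary} subgroups $H_\E \subseteq U(\E)$, whereas trivial components of group gradings correspond exactly to \emph{normal} subgroups (every grading is a quotient of the universal one, so its trivial component is $\bigoplus_{h \in N} \E_h$ for a normal $N$). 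For a concrete counterexample, take $\E = \Vec_{S_3}$: then $\E_{ad} = \Vec$, and the subcategory $\Vec_{\langle (12) \rangle}$ contains $\E_{ad}$ but is not the trivial component of any group grading of $\Vec_{S_3}$, since $\langle (12)\rangle$ is not normal. So at precisely this point your proof asserts without justification the one thing that has to be proved, namely that the subgroup $H_\E \subseteq U(\E)$ corresponding to $\E_1$ is normal (and similarly for $H_\F$).

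The gap is local and fixable inside your own framework, and the paper's proof shows how: normality follows from the fact that every simple object of $\E$ has a partner. Given $Z$ with $(Z,\be) \in S$ and $V \in \O(\E)$, choose $U$ with $(V,U) \in S$; then $(V^* \bt U^*) \ot (Z \bt \be) \ot (V \bt U) = (V^* \ot Z \ot V) \bt (U^* \ot U)$ lies in the subcategory, and since $\be$ is a constituent of $U^* \ot U$, your strong closure property places every simple constituent of $V^* \ot Z \ot V$ in $\E_1$. In terms of universal degrees this says exactly that $H_\E$ is stable under conjugation by all of $U(\E)$, hence normal, so $\E_1$ really is the trivial component of a faithful grading of $\E$ by $G_\E = U(\E)/H_\E$. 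With this paragraph inserted, the remainder of your argument --- the subgroup $\Gamma \subseteq G_\E \times G_\F$, the triviality of its two kernels, the transport lemma, and the canonical reconstruction of the datum giving injectivity --- is correct and complete, and matches the paper's proof step for step.
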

\begin{proof}
We need to show that  every fusion subcategory $\mathcal{S} \subset \C \bt \D$ is equal to some $\mathcal{S}(\E,\, \F,\, G)$
for a unique choice of $(\E,\, \F,\, G)$. 

Let $\E \subset \C$  be a fusion subcategory generated by all $X\in \O(\C)$  such that
$X\bt Y \in \mathcal{S}$   for some non-zero $Y\in \D$.  Similarly, let  $\F \subset \D$
be a fusion subcategory generated by all $Y\in \O(\D)$  such that
$X\bt Y \in \mathcal{S}$   for some non-zero $X\in \C$.

Let 
\begin{equation}
\label{trivial comps}
\tilde{\E} := \mathcal{S} \cap (\C \bt \Vec)\subset \E \mbox{   and  }\tilde\F:= \mathcal{S} \cap (\Vec \bt \D)\subset \F.
\end{equation} 

If $X\in \O(\C)$ and $Y\in\O(\D)$ are such that  $X\bt Y \in  \mathcal{S}$ then $(X^*\ot X)\bt \be$
and $\be\bt (Y^*\ot Y)$ are objects of $\mathcal{S}$. This  means that $\E_{ad}\subset \tilde{\E}$ and 
$\F_{ad}\subset \tilde\F$. Let $H_\E\subset U(\E)$ and $H_\F\subset U(\F)$ be the 
subgroups of the universal groups corresponding to $\tilde\E$ and $\tilde\F$.  
We claim that these subgroups are normal. Indeed, let $X\in \O(\tilde{\E})$
and $V\in \O(\E)$. Then $X\bt \be \in \mathcal{S}$ and $V\bt U \in \mathcal{S}$ for some $U\in \O(\D)$.
So  $(V^*\bt U^*) \ot (X \bt \be) \ot (V\bt U) = (V^* \ot X \ot V) \bt (U^*\ot U) \in \mathcal{S}$
and $V^* \ot X \ot V \in \tilde{\E}$. This implies $gxg^{-1} \in H_\E$ for all $x\in H_\E$ and $g\in U(\E)$.
Thus, $H_\E \subset U(\E)$ is normal. Similarly, $H_\F \subset U(\F)$ is normal.

Hence, subcategories  $\E$ and $\F$ have faithful gradings  
$\deg_\E:\O(\E)\to U(\E)/H_\E =:G_\E$ and $\deg_\F:\O(\F)\to U(\F)/H_\F =:G_\F$
with trivial components $\tilde{\E}$ and $\tilde\F$, respectively. 

Let $X\in \O(\C)$ and $Y_1,\, Y_2 \in\O(\D)$ be such that that  $X\bt Y_1,\, X\bt Y_2 \in  \mathcal{S}$.
Then  $\be \bt (Y_1^*\ot Y_2)$ is  a subobject
of $(X\bt Y_1)^* \ot  (X\bt Y_2)$ and so belongs to $\mathcal{S}$.  Therefore, $Y_1^*\ot Y_2\in \tilde\F$,
so  $\deg_\F(Y_1)=\deg_\F(Y_2)$. Similarly, if 
$X_1,\, X_2 \in \O(\C)$ and $Y \in\O(\D)$ are such that that  $X_1\bt Y,\, X_2\bt Y \in  \mathcal{S}$
then $\deg_\E(X_1)=\deg_\E(X_2)$.

Therefore, there is a well-defined isomorphism $f: G_\E \to G_\F$  such that  $f(\deg_\E(X)) =\deg_\F(Y)$
for all $X\in \O(\C)$ and $Y\in \O(\D)$ such that  $X\bt Y \in \O(\mathcal{S})$.  
This means that $\mathcal{S}$ is a fiber product of $\E$ and $\F$. 

It is clear that subcategories $\E,\, \F$ and their gradings are invariants of $\mathcal{S}$.
\end{proof}

\begin{remark}
\label{intersection of subcategories}
Let $(\E_1,\, \F_1,\, G_1)$ and $(\E_2,\, \F_2,\, G_2)$ be subcategory data for $\C\bt \D$. Then
\begin{equation}
\label{intersection of Ss}
\mathcal{S}(\E_1,\, \F_1,\, G_1) \cap \mathcal{S}(\E_2,\, \F_2,\, G_2) 
\cong (\E_1\cap \E_2) \bt_{G_1\times G_2} (\F_1\cap \F_2),
\end{equation}
where the gradings of $\E_1\cap \E_2$  and $\F_1\cap \F_2$ by $G_1\times G_2$ are such that
the $(g_1,\, g_2)$ components are $(\E_1)_{g_1}\cap (\E_2)_{g_2}$ and $(\F_1)_{g_1}\cap (\F_2)_{g_2}$, respectively
(note that these gradings are not faithful in general).
\end{remark}

\subsection{Braided fusion categories and their gradings}
\label{braided section}
 
Let $\B$ be a braided fusion category with a braiding $c_{X,Y}:X\ot Y \to Y \ot X$.  Two objects $X,Y$ of $\B$
{\em centralize} each other if $c_{Y,X}\circ c_{X,Y} =\id_{X\ot Y}$ and {\em projectively centralize} each other 
if $c_{Y,X}\circ c_{X,Y} = \lambda \id_{X\ot Y}$ for some scalar $\lambda \in k$. For a fusion subcategory $\D \subset \B$
its  {\em centralizer} is
\[
\D' =\{Y\in \B \mid  Y \mbox{ centralizes  each } X\in \D \}. 
\]
The symmetric center of $\B$ is $\Z_{sym}(\B):=\B \cap \B'$. We say that $\B$ is {\em non-degenerate} if $\Z_{sym}(\B)=\Vec$. 

For a non-degenerate  $\B$ there is a canonical  non-degenerate bimultiplicative pairing
\begin{equation}
\label{canonical thing}
\langle\, , \, \rangle : \O(\B_{pt}) \times U(\B)\to k^\times
\end{equation}
defined by $c_{Y,X}c_{X,Y}= \langle\, X, \,g \rangle\, \id_{X\ot Y}$ for all $X\in \O(\B_{pt})$ and $Y\in \B_g$, $g\in U(\B)$.
See \cite[3.3.4]{DGNO} for details. 

\begin{proposition}
\label{centralizer of trivial component}
Let $\B$ be a  non-degenerate braided fusion category and let $\D\subset \B$ be a fusion subcategory with a faithful 
grading
\[
\D =\bigoplus_{g\in G}\, \D_g,
\]
where $G$ is an Abelian group.  The centralizer of the trivial component $\D_1$ of $\D$  admits a faithful grading
\[
\D_1' =  \bigoplus_{\phi\in \widehat{G}}\,(\D_1')_\phi,
\]
where $\widehat{G}$ is the group of characters of $G$ and 
\[
(\D_1')_\phi =\{X\in \B \mid c_{Y,X}\circ c_{X,Y} = \phi(g) \id_{X\ot Y},\, \text{for all } Y\in \D_g,\, g\in G\}.
\]
The trivial component of this grading is $\D'$.
\end{proposition}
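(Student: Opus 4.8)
The plan is to grade $\D_1'$ by recording, for each simple object, the scalar by which it sees the homogeneous components of $\D$ through the double braiding. Throughout I abbreviate the monodromy $M_{X,Y}:=c_{Y,X}\circ c_{X,Y}\in\End(X\ot Y)$. For a simple $X\in\D_1'$ the entire content of the statement is that $M_{X,Y}$ is a scalar depending only on the degree of $Y$; granting this, everything else is formal. So I would first isolate and prove the following key lemma: for simple $X\in\D_1'$ and simple $Y\in\D_g$ one has $M_{X,Y}=\phi_X(g)\,\id_{X\ot Y}$ for a well-defined $\phi_X(g)\in k^\times$, and $g\mapsto\phi_X(g)$ is a character of $G$.

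For the reduction inside the lemma I would use two standard consequences of the hexagon axioms. First, $M_{-,-}$ is natural in each variable, hence restricts along subobjects in either slot. Second, the multiplicativity formula $M_{X,\,Y\ot Z}=(c_{Y,X}\ot\id_Z)(\id_Y\ot M_{X,Z})(c_{X,Y}\ot\id_Z)$; whenever $X$ centralizes $Z$ (so $M_{X,Z}=\id$) this collapses to $M_{X,\,Y\ot Z}=M_{X,Y}\ot\id_Z$. Taking $Z=W\in\D_1$, which $X$ centralizes by hypothesis, together with the fact that every simple of $\D_g$ occurs in $Y\ot W$ for a suitable $W\in\D_1$ (namely $W=Y^*\ot Y'$), shows at once that if $M_{X,Y}$ is a scalar for one $Y\in\O(\D_g)$ then it is the \emph{same} scalar for every object of $\D_g$. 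The analogous multiplicativity in the first variable, applied to $Y\in\D_g$ and $Z\in\D_h$, then yields $\phi_X(gh)=\phi_X(g)\phi_X(h)$, so $\phi_X\in\widehat G$ as soon as scalarity is known.

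The main obstacle is exactly the scalarity of $M_{X,Y}$, and this is where non-degeneracy must enter, paralleling the canonical pairing of \cite[3.3.4]{DGNO} (which handles the case of invertible $X$, where scalarity is automatic because $X\ot Y$ is simple). The full subcategory $C_X\subseteq\B$ of objects $Z$ with $M_{X,Z}$ scalar is a fusion subcategory (the projective centralizer of $X$) containing $\D_1$, and I must upgrade $\D_1\subseteq C_X$ to $\D\subseteq C_X$. Here I would argue that any two simple summands $S,S'\subseteq X\ot Y$ satisfy $S\subseteq(X\ot X^*)\ot S'$ with $X\ot X^*\in\D_1'\cap\B_{ad}$, so all simple summands of $X\ot Y$ are linked through objects that centralize $\D_1$; combined with $M_{X,\,Y\ot Y^*}=\id$ (as $Y\ot Y^*\in\D_1$) and non-degeneracy of the braiding, this forces $M_{X,Y}$ to act by a single scalar on $X\ot Y$. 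I expect this to be the only genuinely technical point.

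Granting the lemma, define $(\D_1')_\phi$ to be the full subcategory generated by the simple $X\in\D_1'$ with $\phi_X=\phi$; note $(\D_1')_\phi\subseteq\D_1'$ since $g=1$ forces $M_{X,W}=\id$ for $W\in\D_1$. For $X\in(\D_1')_\phi$ and $X'\in(\D_1')_\psi$ the first-variable multiplicativity gives $M_{X\ot X',\,Y}=\phi(g)\psi(g)\,\id$ for $Y\in\D_g$, and naturality passes this to every simple summand $T\subseteq X\ot X'$, so $T\in(\D_1')_{\phi\psi}$; hence $\D_1'=\bigoplus_{\phi\in\widehat G}(\D_1')_\phi$ is a $\widehat G$-grading. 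Its trivial component is $\{X:M_{X,Y}=\id\text{ for all }Y\in\D\}=\D'$ directly from the definition (and $\D'\subseteq\D_1'$ automatically). Finally, faithfulness follows from a dimension count using non-degeneracy twice: $\FPdim(\D)\FPdim(\D')=\FPdim(\B)=\FPdim(\D_1)\FPdim(\D_1')$ and $\FPdim(\D)=|G|\FPdim(\D_1)$ give $\FPdim(\D')=\FPdim(\D_1')/|\widehat G|$. Since the support of any grading of a fusion category is a subgroup over which all nonzero components share the same Frobenius--Perron dimension, and the trivial component $\D'$ is nonzero, the displayed equality forces the support to be all of $\widehat G$; this simultaneously gives faithfulness and shows each $(\D_1')_\phi$ has dimension $\FPdim(\D')$.
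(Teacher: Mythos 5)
Your overall route is the same as the paper's: reduce everything to the key claim that a simple $X\in\D_1'$ projectively centralizes every simple $Y\in\D$ with a scalar depending only on $\deg Y$, then get the $\widehat{G}$-grading from the hexagon identities, identify the trivial component with $\D'$ by definition, and use non-degeneracy for faithfulness. Your handling of the routine parts is correct and in fact more detailed than the paper, which compresses the first step into a citation of \cite[3.3]{DGNO} and disposes of faithfulness in one sentence; your constancy-on-components argument via $W=Y^*\ot Y'\in\D_1$ and your faithfulness count via two applications of M\"uger's theorem ($\FPdim(\D)\FPdim(\D')=\FPdim(\B)=\FPdim(\D_1)\FPdim(\D_1')$) are both fine. (Two small slips: the character property $\phi_X(gh)=\phi_X(g)\phi_X(h)$ uses second-variable multiplicativity $M_{X,Y\ot Z}$ with $Y\in\D_g$, $Z\in\D_h$, not first-variable; and the class of \emph{objects} $Z$ with $M_{X,Z}$ scalar is not closed under direct sums, so the projective centralizer must be defined as the subcategory generated by such simples.)

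The genuine gap is exactly at the step you flag as the crux, and your proposed mechanism does not close it. Knowing that any two simple summands $S,S'$ of $X\ot Y$ satisfy $S'\subseteq (X\ot X^*)\ot S$ constrains \emph{which} simples occur in $X\ot Y$, but it says nothing about the endomorphism $M_{X,Y}$ itself: even on an isotypic object $S^{\oplus n}$ an invertible endomorphism can be an arbitrary element of $GL_n(k)$, so no bookkeeping of summands can rule out distinct eigenvalues and force $M_{X,Y}$ to be a single scalar. Moreover, non-degeneracy is a red herring at this point: the implication you need --- if $X$ centralizes $Y\ot Y^*$ then $X$ and $Y$ projectively centralize each other --- is \cite[Proposition 3.32]{DGNO}, valid in \emph{any} braided fusion category, and it is precisely what the paper's citation of \cite[3.3]{DGNO} supplies (it applies here because $Y\ot Y^*\in\D_{ad}\subseteq\D_1$); non-degeneracy enters only in your last paragraph. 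The correct proof is a rigidity computation, not a summand argument, and it starts from the very identity you wrote down: $M_{X,Y\ot Y^*}=\id$ gives $\id_Y\ot M_{X,Y^*}=M_{Y,X}^{-1}\ot\id_{Y^*}$ in $\End(Y\ot X\ot Y^*)$, and closing off the $Y^*$ strand (using that for simple $Y$ a composite $\be\to Y^*\ot Y\to\be$ of nonzero morphisms is a nonzero scalar, by \cite{ENO1}) shows $M_{Y,X}^{-1}$ equals $\id_Y$ tensored with an endomorphism of the simple object $X$, hence is scalar; conjugating by $c_{X,Y}$ then makes $M_{X,Y}$ scalar. With that substitution (or simply the citation) your proof is complete; as written, the central claim of the proposition is asserted rather than proved.
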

\begin{proof}
It follows from \cite[3.3]{DGNO} that a simple object $X$ belongs to $\D_1'$ if and only if 
projectively centralizes every simple $Y\in \D$, i.e.,
$c_{Y,X}\circ c_{X,Y} =\lambda_Y  \id_{X\ot Y}$  for some $\lambda_Y \in k^\times$.  
Furthermore, if $Y_1,\, Y_2$ are simple objects lying in $\D_g$ then $\lambda_{Y_1} = \lambda_{Y_2}$.  
Let us denote the latter scalar by $\phi_X(g)$. 
It follows from the braiding axioms that the assignment
\[
\O(\D_1') \to \widehat{G} : X \mapsto \phi_X
\]
is a grading of $\D_1'$ by $\widehat{G}$.  

The fact that the trivial component is $\D'$ and the faithfulness of grading follow from the non-degeneracy of $\B$.
\end{proof}

\subsection{Lagrangian algebras in the center}

For any fusion category $\C$ let $\Z(\C)$ denote its Drinfeld center.

Let $\B$ be a braided fusion category.  A {\em Lagrangian} algebra in $\B$ is a commutative  separable algebra
$A$ in $\B$ such that $\Hom_\B(A,\, \be)\cong k$ and $\FPdim(A)^2 = \FPdim(\B)$. 

Let $I: \C \to \Z(\C)$ denote  the adjoint  of the forgetful functor $F: \Z(\C)\to \C$.   Then $I(\be)$
is a canonical Lagrangian algebra  in $\Z(\C)$.

It was explained in \cite{DMNO} that any braided equivalence
$a: \Z(\C) \xrightarrow{\sim} \B$  gives rise to a Lagrangian  algebra $A= a(I(\be))$ in $\B$.
Conversely, given a Lagrangian algebra $A\in \B$ there is a braided tensor equivalence $\Z(\B_A) \xrightarrow{\sim}  \B$,
where $\B_A$ denotes the fusion category of $A$-modules in $\B$.


\section{Subcategories transversal to a Lagrangian algebra}
\label{sect 3}

\begin{definition}
Let $\C$ be a fusion category, let $\mathcal{B}\subset \C$ be a fusion subcategory, and let
$A$ be an algebra in $\C$.  We will assume that $\Hom_\C(A,\, \be)\cong k$, i.e., that $A$ is a {\em connected} algebra.
We say that $\mathcal{B}$ is {\em transversal} to $A$ if
\begin{equation}
\label{B, A disjoint}
\Hom_\C(X,\, A) = \Hom_\C(X,\, \be)
\end{equation}
for all $X\in \mathcal{B}$.
\end{definition}

In other words, $\mathcal{B}$ is transversal to $A$ if and only if  $\Hom_\C(X,\, A) = 0$ for all non-identity 
$X\in \O(\mathcal{B})$.   

\begin{theorem}
\label{sections as subcategories}
 
Let $\C$ be a fusion category and let $A:=I(\be)$ be the canonical Lagrangian algebra in $\Z(\C)$.
Braidings on $\C$ are in bijection with fusion subcategories $\B \subset \Z(\C)$  transversal to $A$ 
and such that  $\FPdim(\B)=\FPdim(\C)$. 
\end{theorem}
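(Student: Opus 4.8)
The plan is to exhibit two mutually inverse maps. In one direction, a braiding $c$ on $\C$ produces the tensor functor $G_c\colon \C\to\Z(\C)$, $X\mapsto (X,c_{X,-})$, where the half-braiding is read off from $c$; the hexagon axioms are exactly the statement that $c_{X,-}$ is a half-braiding and that $G_c$ is monoidal. I would set $\B_c$ to be the image of $G_c$. In the other direction, given a fusion subcategory $\B\subset\Z(\C)$ transversal to $A$ with $\FPdim(\B)=\FPdim(\C)$, I would show that the forgetful functor restricts to a tensor equivalence $F|_\B\colon\B\xrightarrow{\sim}\C$ and transport the inherited braiding of $\B\subset\Z(\C)$ along this equivalence to a braiding on $\C$.

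For the first direction, since $X$ simple forces $\End_{\Z(\C)}(G_c X)=\{f\in\End_\C(X)\mid f \text{ commutes with }c_{X,-}\}=\End_\C(X)=k$, the functor $G_c$ sends simples to simples, so $\B_c$ is a genuine fusion subcategory and $G_c\colon\C\to\B_c$ is an equivalence; in particular $\FPdim(\B_c)=\FPdim(\C)$. Transversality of $\B_c$ is immediate from the adjunction $\Hom_{\Z(\C)}(G_cX,A)=\Hom_{\Z(\C)}(G_cX,I(\be))\cong\Hom_\C(F G_c X,\be)=\Hom_\C(X,\be)$, which vanishes for every non-unit simple $X$.

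The essential point, and the step I expect to be the main obstacle, is the converse claim that transversality together with the dimension equality forces $F|_\B$ to be an equivalence. Rewriting transversality through the same adjunction gives $\Hom_\C(F Z,\be)=0$ for every non-unit simple $Z\in\O(\B)$. For simple $X,Y\in\O(\B)$ I would then compute, using that $F$ is a tensor functor and the rigidity of $\C$,
\[
\Hom_\C(FX,FY)\cong\Hom_\C(\be,F(X^*\ot Y))\cong\bigoplus_{Z\in\O(\B)}N^{Z}_{X^*Y}\,\Hom_\C(\be,FZ),
\]
where $N^Z_{X^*Y}$ are the fusion coefficients computed in $\B$. Since $\Hom_\C(\be,FZ)\cong\Hom_\C(F(Z^*),\be)$ vanishes for $Z\neq\be$ by transversality and equals $k$ for $Z=\be$, only the term $Z=\be$ survives, giving $\dim\Hom_\C(FX,FY)=N^{\be}_{X^*Y}=\dim\Hom_\B(X,Y)$. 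As $F$ is faithful, $F|_\B$ is therefore fully faithful; hence it identifies $\B$ with a fusion subcategory of $\C$ whose $\FPdim$ equals $\FPdim(\B)$, and the hypothesis $\FPdim(\B)=\FPdim(\C)$ forces this subcategory to be all of $\C$. Thus $F|_\B\colon\B\to\C$ is a tensor equivalence, and pulling back the braiding of $\B$ along it yields a braiding on $\C$.

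Finally I would check the two assignments are mutually inverse. Starting from $c$, the equivalence $F|_{\B_c}$ is inverse to $G_c$, and the braiding of $\Z(\C)$ between $G_cX$ and $G_cY$ is by definition $c_{X,Y}$, so the transported braiding is again $c$. Starting from $\B$, each object of $\B$ has the form $(X,\gamma)$ with $F(X,\gamma)=X$; the braiding produced on $\C$ satisfies $c_{X,Y}=\gamma_Y$ for all $Y\in\C$ (as $F|_\B$ is surjective on objects), whence $(X,\gamma)=(X,c_{X,-})=G_c X$ and $\B_c=\B$. This gives the desired bijection.
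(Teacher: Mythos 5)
Your proof is correct and follows essentially the same route as the paper: both identify braidings with sections of the forgetful functor $F:\Z(\C)\to\C$, characterize such sections as fusion subcategories $\B\subset\Z(\C)$ with $F|_\B$ an equivalence (equivalently, $\FPdim(\B)=\FPdim(\C)$ plus full faithfulness), and translate full faithfulness into transversality via the adjunction $\Hom_{\Z(\C)}(Z,\, I(\be))\cong \Hom_\C(F(Z),\,\be)$, which the paper phrases through the free-module functor $\Z(\C)\to\Z(\C)_A\cong\C$. The only difference is level of detail: you prove explicitly (via the computation $\dim\Hom_\C(FX,FY)=N^{\be}_{X^*Y}$ and the mutual-inverse checks) what the paper cites as well known.
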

\begin{proof}
It is well known that braidings on a fusion category $\C$  are in bijection with  sections of the forgetful
functor $F:\Z(\C)\to \C$, i.e., with embeddings $\iota:\C \to \Z(\C)$ such that  $F\circ \iota = \id_\C$.
The latter correspond to  fusion subcategories $\B \subset \Z(\C)$
such that the restriction $F|_{\B} :\B \to \C$ is an equivalence. This is equivalent to 
$\FPdim(\B)=\FPdim(\C)$ and $F|_{\B} :\B \to \C$  being injective, i.e., fully faithful.

Note that $F$ is identified with the functor of taking free $A$-modules:
\[
\Z(\C) \to \Z(\C)_A \cong \C :  Z \mapsto A\ot Z. 
\]
Observe that 
\[
\Hom_{\C}(F(Z),\, \be) \cong   \Hom_{\Z(\C)_A}(A\ot Z,\, A)  \cong \Hom_{\Z(\C)}(Z,\, A),
\]
for all $Z\in \Z(\C)$. The injectivity of $F|_{\B}:\B \to \C$ is equivalent to  $\Hom_{\C}(F(Z),\, \be) = \Hom_{\B}(Z,\, \be)$ 
for all $Z\in \B$ and, hence,  to $A$ and $\B$ being transversal.
\end{proof}


\section{Braidings on non-degenerate fusion categories}
\label{sect 4}
\subsection{Classification of braidings}

Let $\B$ be a fusion category with a non-degenerate braiding $c =\{c_{X,Y}\}$.

Any grading of a fusion category $\C$ by a group $G$  
determines a  homomorphism 
\[
h_\C: \O(\C_{pt}) \to G.
\]

\begin{theorem}
\label{braidings on non-degenerate}
The braidings on $\B$ are in bijection with subcategory data  $(\E,\, \F,\, G)$ such that
$\E \vee \F =\B$,  $\E \cap \F$ is pointed, and  $h_\E+h_\F : \O(\F \cap \E) \to G$ is an isomorphism.
Here $\E \vee \F$ denotes the fusion subcategory of  $\B$ generated by $\E$ and $\F$. 
\end{theorem}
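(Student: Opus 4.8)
The plan is to combine Theorem~\ref{sections as subcategories} with the categorical Goursat Lemma (Theorem~\ref{Goursat}), using the fact that for a non-degenerate $\B$ the center $\Z(\B)$ is canonically identified with $\B \bt \bar\B$, where $\bar\B$ denotes $\B$ with the reversed braiding. Let me think about this carefully.

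Let me work through the key facts I need.
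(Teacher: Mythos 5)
Your opening move is exactly the paper's: reduce via Theorem~\ref{sections as subcategories} to classifying fusion subcategories of $\Z(\B)\cong \B\bt\B^{\rev}$ that are transversal to the canonical Lagrangian algebra and have Frobenius--Perron dimension $\FPdim(\B)$, then use Theorem~\ref{Goursat} to write every such subcategory as $\mathcal{S}(\E,\,\F,\,G)=\E\bt_G\F$. That strategy is correct. The problem is that your proposal ends where the proof begins: nothing after the strategic announcement is carried out, and the entire mathematical content of the theorem lies in translating ``transversal to $A$ and of dimension $\FPdim(\B)$'' into the three conditions $\E\vee\F=\B$, $\E\cap\F$ pointed, and $h_\E+h_\F$ an isomorphism.

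Concretely, what is missing: (a) the identification of the forgetful functor with the tensor multiplication $\B\bt\B^{\rev}\to\B$ and the explicit description $A=\bigoplus_{X\in\O(\B)} X^*\bt X$ of the canonical Lagrangian algebra --- without this you cannot test transversality at all; (b) the dimension count, using \cite[Lemma 3.38]{DGNO}, that $\FPdim(\mathcal{S}(\E,\,\F,\,G))=\FPdim(\E)\FPdim(\F)/|G|=\FPdim(\E\vee\F)\FPdim(\E\cap\F)/|G|$, which combined with injectivity of $F$ on $\mathcal{S}(\E,\,\F,\,G)$ forces $\E\vee\F=\B$ and $\FPdim(\E\cap\F)=|G|$; (c) the transversality analysis: a nonzero simple $X\in\E_g\cap\F_h$ gives $X\ot X^*\in\E_1\cap\F_1$, and transversality to $A$ forces $X\ot X^*=\be$, so $\E\cap\F$ is pointed; likewise transversality is equivalent to $\E_g\cap\F_{g^{-1}}=0$ for all $g\neq 1$, which is precisely the injectivity of $h_\E+h_\F$, and surjectivity then follows from $|\O(\E\cap\F)|=|G|$; and (d) the converse direction, checking that every datum satisfying the three conditions produces a subcategory of dimension $\FPdim(\B)$ containing no simple object of the form $X^*\bt X$ with $X\neq\be$, hence transversal to $A$. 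As it stands, you have a correct plan that coincides with the paper's route, but no proof.
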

\begin{proof}
We will use the characterization of braidings from Theorem~\ref{sections as subcategories}.

Since $\B$ is non-degenerate, we have $\Z(\B) \cong \B \bt \B^\rev$, where $\B^\rev$ denotes $\B$
equipped with the reverse braiding $c^\rev_{X,Y}:=c^{-1}_{Y,X}$.
The forgetful functor $F:\Z(\B)\to \B$ is identified with the
tensor multiplication $\B \bt \B^\rev\to \B$ and the canonical Lagrangian algebra in $\Z(\B)$ is
\[
A=\oplus_{X\in \O(\B)}\, X^*\bt X. 
\]
The notion of a subcategory datum for a tensor product of fusion categories was introduced in Definition~\ref{subcategory datum}.
Suppose that $\mathcal{S}(\E,\, \F,\, G)$ is transversal to $A$ and is such that $\FPdim(\mathcal{S}(\E,\, \F,\, G))=\FPdim(\B)$.
Since the restriction of $F$ on $\mathcal{S}(\E,\, \F,\, G)$ is injective we must have 
\[
\FPdim(F(\mathcal{S}(\E,\, \F,\, G)))=\FPdim(\B).
\]
On the other hand, $\FPdim(F(\mathcal{S}(\E,\, \F,\, G))) \leq  \FPdim(\E\vee \F)$, so $\E\vee \F =\B$.

Using \cite[Lemma 3.38]{DGNO}  we get
\begin{equation}
\label{EFdim}
\FPdim(\mathcal{S}(\E,\, \F,\, G))= \frac{\FPdim(\E)\FPdim(\F)}{|G|} 
= \frac{\FPdim(\E\vee \F)\FPdim(\E\cap \F)}{|G|}.
\end{equation}

It follows from \eqref{EFdim} that  $\FPdim(\E \cap \F) = |G|$.
If $X$ is a non-zero  simple object in $\E_g\cap \F_h$ then $X\ot X^*\in \E_1\cap \F_1$. It follows that $X\ot X^*=\be$ (since other possibilities 
contradict the transversality of $\mathcal{S}(\E,\, \F,\, G$) and $A$).  
Thus, $X$ is invertible and $\E\cap \F$ is pointed. For any non-identity $g\in G$ we must have $\E_g\cap \F_{g^{-1}}=0$.
This is equivalent to the injectivity of  $h_\E+h_\F$. 
Indeed, otherwise there is a nonzero $X\in \E_g$ such that $X^*\in \F_g$ and $X\bt X^*\in \mathcal{S}(\E,\, \F,\, G)$,
contradicting the transversality assumption. 

Since $|\O(\E \cap \F) | =|G|$, $h_\E+h_\F$ is an isomorphism.

Conversely, suppose that a datum $(\E,\, \F,\, G)$ satisfies conditions in the statement of the theorem. 
By \eqref{EFdim},  $\FPdim(\mathcal{S}(\E,\, \F,\, G))=\FPdim(\B)$.  We have  $\E_g\cap \F_{g^{-1}}=0$ for all $g\in G,\, g\neq e$. 
Thus, $\mathcal{S}(\E,\, \F,\, G)$ contains no simple
objects of the form $X^*\bt X$ for $X\neq 1$, i.e., $\mathcal{S}(\E,\, \F,\, G)$ is transversal to $A$.
%
%
\end{proof}

\begin{remark}
\label{explicit braiding on non-degenerate}
Under the conditions of Theorem~\ref{braidings on non-degenerate}, we have $\B \cong \E \bt_G \F$ 
(as a fusion category) and the  corresponding braiding $\tilde{c}$ is given by  
\[
\tilde{c}_{X_1\bt Y_1, X_2\bt Y_2} = c_{X_1,X_2} \bt c^{-1}_{Y_2,Y_1}
\]
for all $X_1\bt Y_1,\, X_2\bt Y_2$ in $\B$. 
\end{remark}

\begin{corollary}
\label{corollary centralizer of SEFG}
Let $(\E,\, \F,\, G)$ be a subcategory datum for $\B \bt \B^\rev$. 
Then 
\begin{equation}
\label{centralizer of SEFG}
\mathcal{S}(\E,\, \F,\, G)' = \bigoplus_{\phi\in \widehat{G}}\, (\E_1')_\phi \bt (\F_1)'_{\phi^{-1}},
\end{equation}
where the $\widehat{G}$-gradings  on $\E_1'$ and $\F_1'$ are defined as in Proposition~\ref{centralizer of trivial component}.
\end{corollary}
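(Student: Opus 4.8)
The plan is to compute the centralizer of $\mathcal{S}(\E,\,\F,\,G)=\E\bt_G\F$ inside $\B\bt\B^\rev$ by exploiting two facts: the centralizer operation in a Deligne tensor product factors as $(\mathcal{X}\bt\mathcal{Y})'=\mathcal{X}'\bt\mathcal{Y}'$ when $\mathcal{X},\mathcal{Y}$ are subcategories of the respective tensor factors, and the fiber product $\E\bt_G\F$ can be rewritten in terms of centralizers of its trivial component using Proposition~\ref{centralizer of trivial component}. First I would record that an object $P\bt Q\in\B\bt\B^\rev$ centralizes $\mathcal{S}(\E,\,\F,\,G)=\bigoplus_{g\in G}\E_g\bt\F_g$ if and only if for every $g\in G$ and every simple $X\in\E_g$, $Y\in\F_g$ the object $P$ centralizes $X$ in $\B$ and $Q$ centralizes $Y$ in $\B^\rev$ \emph{simultaneously}, where the braiding on $\B^\rev$ is the reverse of that on $\B$.

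The key step is to unwind this simultaneous condition. A simple $P$ centralizes all of $\E_1$ iff $P\in\E_1'$, and by Proposition~\ref{centralizer of trivial component} the category $\E_1'$ carries a faithful $\widehat{G}$-grading $\E_1'=\bigoplus_{\phi\in\widehat G}(\E_1')_\phi$ recording the scalar by which $P$ projectively centralizes the graded pieces of $\E$; similarly $\F_1'=\bigoplus_{\psi\in\widehat G}(\F_1')_\psi$, with the caveat that the scalars in $\B^\rev$ are the inverses of those in $\B$. For $P\bt Q$ to \emph{genuinely} centralize $X\bt Y$ with $X\in\E_g$, $Y\in\F_g$, the product of the two projective-centralization scalars must be $1$ for every $g$. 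If $P\in(\E_1')_\phi$ contributes the scalar $\phi(g)$ and $Q\in(\F_1')_\psi$ contributes $\psi(g)^{-1}$ (the inverse coming from the reversed braiding), then the net scalar is $\phi(g)\psi(g)^{-1}$, and requiring this to equal $1$ for all $g\in G$ forces $\psi=\phi$, i.e. $Q\in(\F_1')_\phi$ matches in the notation $(\F_1')'_{\phi^{-1}}$ used in the statement. Summing over $\phi\in\widehat G$ then yields the claimed decomposition $\mathcal{S}(\E,\,\F,\,G)'=\bigoplus_{\phi\in\widehat G}(\E_1')_\phi\bt(\F_1')_{\phi^{-1}}$.

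I would organize the argument as a double inclusion, or more efficiently verify equality of Frobenius--Perron dimensions once one inclusion is established: the right-hand side is clearly contained in the centralizer, and a dimension count using $\FPdim(\E_1')=\FPdim(\B)/\FPdim(\E_1)$ (valid by non-degeneracy of $\B$) against the known formula $\FPdim(\mathcal{S}')=\FPdim(\B\bt\B^\rev)/\FPdim(\mathcal{S})$ would force equality. The main obstacle is bookkeeping the interaction between the grading group $G$ of $\E$ and $\F$ and the character group $\widehat G$ indexing the centralizer gradings, together with the sign/inverse introduced by passing to $\B^\rev$: one must be careful that the double braiding $c_{Y,X}\circ c_{X,Y}$ in $\B^\rev$ produces the reciprocal scalar, so that the matching condition pairs $(\E_1')_\phi$ with $(\F_1')_{\phi^{-1}}$ rather than with $(\F_1')_\phi$. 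Once that convention is pinned down, the rest is a direct application of Proposition~\ref{centralizer of trivial component} to each tensor factor.
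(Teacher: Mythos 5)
Your proposal is correct and takes essentially the same route as the paper's proof: factor the double braiding as $\beta_{P\bt Q,\,X\bt Y}=\beta_{P,X}\bt \beta^{\rev}_{Q,Y}$, note that centralizing a simple object forces the two factors to be mutually inverse scalars, and then match the $\widehat{G}$-gradings on $\E_1'$ and $\F_1'$ from Proposition~\ref{centralizer of trivial component}, with the character inversion coming from the reversed braiding on the second factor. (Your opening phrase that $P$ must centralize $X$ and $Q$ must centralize $Y$ ``simultaneously'' is too strong if read literally, but your second paragraph replaces it with the correct condition that the product of the two projective-centralization scalars be $1$, which is exactly the paper's argument; the Frobenius--Perron dimension count you sketch is a valid but unnecessary alternative finish.)
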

\begin{proof}
For all objects $V,W$ let us denote  $\beta_{V,W}:= c_{W,V}\circ c_{V,W}$. 

Let $X\bt Y$ be an object of $\B\bt \B$ and let $X_g \bt Y_g$ be an object of $\mathcal{S}(\E,\, \F,\, G)_g,\, g\in G$. 
Then
\[
\beta_{X\bt Y, X_g \bt Y_g} = \beta_{X, X_g } \bt \beta_{Y,  Y_g}
\]
and so $X\bt Y$ centralizes $X_g \bt Y_g$ if and only if $\beta_{X, X_g }$ and $\beta_{Y,  Y_g}$ are mutually inverse scalars.
This means that $X$ projectively centralizes $\E$ and centralizes $\E_1$ (respectively, $Y$ projectively centralizes $\F$ 
and centralizes $\F_1$).  Thus,
\[
\mathcal{S}(\E,\, \F,\, G)'  =  \E_1' \bt_{\widehat{G}} \F_1' = \bigoplus_{\phi\in \widehat{G}}\, (\E_1')_\phi \bt (\F_1)'_{\phi^{-1}},
\]
as required.
\end{proof}


Let $\B(\F,\, \E,\, G)$ denote the braided fusion category (with underlying fusion category $\B$)
corresponding to the datum $(\E,\, \F,\, G)$  from Theorem~\ref{braidings on non-degenerate}. 

\begin{corollary}
We have $\B(\E,\, \F,\, G)^\rev \cong \B(\E_1',\, \F_1',\, \widehat{G})$, where the fiber product
of $\E_1'$ and $\F_1'$ is as in \eqref{centralizer of SEFG}.
\end{corollary}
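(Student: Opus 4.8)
The plan is to combine the section description of braidings from Theorem~\ref{sections as subcategories} with the centralizer computation of Corollary~\ref{corollary centralizer of SEFG}. The only genuinely new ingredient I need is to understand how reversing a braiding acts on the corresponding subcategory of $\Z(\B) \cong \B \bt \B^\rev$; everything else will be an application of results already established.

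First I would isolate the following principle. Suppose a braiding $c'$ on $\B$ corresponds, under Theorem~\ref{sections as subcategories}, to a fusion subcategory $\mathcal{S} \subset \Z(\B)$, namely the image of the section $\iota_+ : X \mapsto (X,\, c'_{X,-})$. Then the reverse braiding $(c')^\rev$ corresponds to the centralizer $\mathcal{S}'$ in $\Z(\B)$. To see this, note that the section attached to $(c')^\rev$ is $\iota_- : X \mapsto (X,\, (c'_{-,X})^{-1})$, since the braiding read off from this half-braiding at $Y$ is $(c'_{Y,X})^{-1} = (c')^\rev_{X,Y}$. Evaluating the canonical double braiding of $\Z(\B)$ between $\iota_+(X)$ and $\iota_-(Y)$ gives $(c'_{X,Y})^{-1}\circ c'_{X,Y} = \id$, so every object of $\iota_-(\B)$ centralizes every object of $\mathcal{S} = \iota_+(\B)$, whence $\iota_-(\B) \subseteq \mathcal{S}'$. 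Because $\Z(\B)$ is non-degenerate we have $\FPdim(\mathcal{S}') = \FPdim(\Z(\B))/\FPdim(\mathcal{S}) = \FPdim(\B) = \FPdim(\iota_-(\B))$, and therefore $\iota_-(\B) = \mathcal{S}'$.

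I would then apply this principle to the braiding $\tilde{c}$ underlying $\B(\E,\, \F,\, G)$, whose associated subcategory is $\mathcal{S}(\E,\, \F,\, G)$: the reverse braiding $\tilde{c}^\rev$ corresponds to $\mathcal{S}(\E,\, \F,\, G)'$. By Corollary~\ref{corollary centralizer of SEFG} this centralizer equals the fiber product $\E_1' \bt_{\widehat{G}} \F_1'$ described in \eqref{centralizer of SEFG}, which is exactly the subcategory underlying $\B(\E_1',\, \F_1',\, \widehat{G})$. In particular $\mathcal{S}(\E,\, \F,\, G)'$ is the image of a section, so by Theorems~\ref{sections as subcategories} and~\ref{braidings on non-degenerate} the triple $(\E_1',\, \F_1',\, \widehat{G})$ is a legitimate subcategory datum and $\B(\E_1',\, \F_1',\, \widehat{G})$ is defined; unwinding the definitions, its braiding is precisely $\tilde{c}^\rev$, the braiding of $\B(\E,\, \F,\, G)^\rev$. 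This yields the asserted equivalence.

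The main obstacle is the principle in the second paragraph, i.e.\ identifying the subcategory attached to the reversed braiding with the centralizer. The only real risk is bookkeeping: one must fix compatible conventions for the canonical braiding on $\Z(\B) \cong \B \bt \B^\rev$, for the direction of the half-braidings defining the two sections $\iota_\pm$, and for the inverse $\widehat{G}$-grading on $\F_1'$ appearing in \eqref{centralizer of SEFG}. Once these are pinned down consistently, the double-braiding computation and the dimension count are routine, and the rest of the argument is purely a matter of quoting Corollary~\ref{corollary centralizer of SEFG} and Theorem~\ref{braidings on non-degenerate}.
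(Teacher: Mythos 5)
Your proposal is correct and follows exactly the route the paper intends: the corollary is stated without proof as an immediate consequence of Corollary~\ref{corollary centralizer of SEFG}, the implicit ingredient being precisely your principle that reversing a braiding replaces the corresponding section's image $\mathcal{S}\subset\Z(\B)$ by its centralizer $\mathcal{S}'$. Your double-braiding computation $\tau_X\circ\sigma_Y=(c'_{X,Y})^{-1}\circ c'_{X,Y}=\id$ together with the dimension count $\FPdim(\mathcal{S}')=\FPdim(\Z(\B))/\FPdim(\mathcal{S})=\FPdim(\B)$ (valid since $\Z(\B)$ is non-degenerate) correctly fills in that step, and the rest is, as you say, quoting \eqref{centralizer of SEFG}.
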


\begin{corollary}
\label{sym center pointed}
The symmetric center of $\B(\F,\, \E,\, G)$  has a (not necessarily faithful) grading
\[
\Z_{sym}(\B(\F,\, \E,\, G)) =\bigoplus_{(g,\phi)\in G \times \widehat{G}}\, \Z_{sym}(\B(\F,\, \E,\, G))_{(g,\phi)}, 
\]
where $\Z_{sym}(\B(\F,\, \E,\, G))_{(g,\phi)} \cong (\E_g \cap (\E_1)'_\phi) \bt (\F_g \cap (\F_1)'_{\phi^{-1}})$.
In particular, $\Z_{sym}(\B(\F,\, \E,\, G))$ is pointed.
\end{corollary}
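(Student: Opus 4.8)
The plan is to compute the symmetric center of $\B(\F,\, \E,\, G)$ directly by intersecting the braided category with its own centralizer, exactly as computed in Corollary~\ref{corollary centralizer of SEFG}. First I would recall that $\B(\F,\, \E,\, G)$ is, as a braided category, the image $\mathcal{S}(\E,\, \F,\, G) = \E \bt_G \F$ sitting inside $\B \bt \B^\rev \cong \Z(\B)$, with the braiding it inherits as a subcategory of the center. Under this identification the symmetric center is $\Z_{sym}(\mathcal{S}(\E,\, \F,\, G)) = \mathcal{S}(\E,\, \F,\, G) \cap \mathcal{S}(\E,\, \F,\, G)'$, where the prime denotes the centralizer taken inside $\B \bt \B^\rev$. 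So the strategy is to take the two explicit descriptions we already have, namely the grading $\mathcal{S}(\E,\, \F,\, G) = \bigoplus_{g\in G}\, \E_g \bt \F_g$ and the centralizer formula $\mathcal{S}(\E,\, \F,\, G)' = \bigoplus_{\phi\in \widehat{G}}\, (\E_1')_\phi \bt (\F_1')_{\phi^{-1}}$ from Corollary~\ref{corollary centralizer of SEFG}, and intersect them.

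Next I would invoke Remark~\ref{intersection of subcategories}, the categorical analogue of the intersection part of Goursat's lemma, to carry out this intersection componentwise. The subcategory datum for $\mathcal{S}(\E,\, \F,\, G)$ is $(\E,\, \F,\, G)$ graded by $G$, and the datum for its centralizer is $(\E_1',\, \F_1',\, \widehat{G})$ graded by $\widehat{G}$. Applying \eqref{intersection of Ss}, the intersection is graded by $G \times \widehat{G}$ with $(g,\phi)$-component equal to $(\E_g \cap (\E_1')_\phi) \bt (\F_g \cap (\F_1')_{\phi^{-1}})$, which is precisely the claimed formula. This furnishes both the grading and the identification of the graded pieces in one stroke, so the bulk of the argument reduces to correctly matching up the two data and reading off the product of component intersections.

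Finally, to conclude that $\Z_{sym}(\B(\F,\, \E,\, G))$ is pointed, I would argue that every simple object appearing in a nonzero graded component is invertible. A simple object of $\Z_{sym}$ lives in some $(\E_g \cap (\E_1')_\phi) \bt (\F_g \cap (\F_1')_{\phi^{-1}})$, hence has the form $X \bt Y$ with $X \in \E_g$ and $Y \in \F_g$ both simple, i.e.\ $X\bt Y$ is a simple object of $\E_g \cap \F_g$ regarded inside $\mathcal{S}(\E,\, \F,\, G)$. By the same argument used in the proof of Theorem~\ref{braidings on non-degenerate}, transversality of $\mathcal{S}(\E,\, \F,\, G)$ to the canonical Lagrangian algebra $A$ forces any simple object in a component with $g\neq e$ to vanish unless it is invertible, and for $g=e$ we have $X\bt Y \in \E_1 \cap \F_1 \subset \E \cap \F$, which is pointed by hypothesis; in either case $X$ and $Y$ are invertible, so $X\bt Y$ is invertible. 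Thus every simple object of the symmetric center is invertible and the category is pointed.

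I expect the main obstacle to be purely bookkeeping: making sure the grading groups and the character pairing are matched consistently when applying \eqref{intersection of Ss}, in particular that the $\widehat{G}$-grading on $\F_1'$ enters with the inverse character $\phi^{-1}$ exactly as produced by Corollary~\ref{corollary centralizer of SEFG}. Once the two subcategory data are correctly assembled, the intersection formula and the pointedness conclusion follow with essentially no further computation.
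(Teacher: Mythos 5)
Your first step is right and matches the paper: $\Z_{sym}(\B(\F,\,\E,\,G))=\mathcal{S}(\E,\,\F,\,G)\cap\mathcal{S}(\E,\,\F,\,G)'$, and intersecting the datum $(\E,\,\F,\,G)$ with the datum $(\E_1',\,\F_1',\,\widehat{G})$ of Corollary~\ref{corollary centralizer of SEFG} via Remark~\ref{intersection of subcategories} gives exactly the stated $(g,\phi)$-components; this is precisely how the paper obtains the grading formula, and your bookkeeping of the inverse character $\phi^{-1}$ is correct.

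The pointedness step, however, has a genuine gap. You say a simple object of the $(g,\phi)$-component ``is a simple object of $\E_g\cap\F_g$,'' conflating the Deligne product with the intersection: a simple $X\bt Y$ with $X\in\E_g$, $Y\in\F_g$ is an object of $\E_g\bt\F_g=\mathcal{S}(\E,\,\F,\,G)_g$, whereas $\E_g\cap\F_g$ is a subcategory of $\B$ consisting of objects lying in \emph{both} $\E_g$ and $\F_g$. The transversality argument in the proof of Theorem~\ref{braidings on non-degenerate} concerns only the latter: it shows that simples of $\E_g\cap\F_h$ are invertible and that $\E_g\cap\F_{g^{-1}}=0$ for $g\neq e$; it says nothing about invertibility of simples $X\bt Y$ in $\E_g\bt\F_g$. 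Indeed, your argument uses only that $X\bt Y$ is a simple object of $\mathcal{S}(\E,\,\F,\,G)$ together with transversality to $A$ --- it never uses the centralizer constraints $X\in(\E_1')_\phi$, $Y\in(\F_1')_{\phi^{-1}}$. If it were valid, it would prove that \emph{every} simple object of $\mathcal{S}(\E,\,\F,\,G)\cong\B$ is invertible, i.e.\ that $\B$ itself is pointed, which is absurd (take the standard datum $(\B,\,\Vec,\,1)$ for any non-pointed $\B$). Your $g=e$ case fails for the same reason: $X\bt Y\in\E_1\bt\F_1$ does not give $X\bt Y\in\E_1\cap\F_1$, and the membership can genuinely fail --- for $\B$ the toric code $\Z(\Vec_{\mathbb{Z}_2})$ with $\E=\langle e\rangle$, $\F=\langle m\rangle$, $G=1$, the braiding $\tilde{c}$ is symmetric, so $\Z_{sym}=\E\bt\F$ and its $g=e$ part contains $e\bt m$, while $\E\cap\F=\Vec$.

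The paper's (terse) argument goes through the trivial component of the $G\times\widehat{G}$-grading instead, and this is the missing idea: for a simple $Z=X\bt Y$ in the $(g,\phi)$-component of $\Z_{sym}$, the object $Z\ot Z^*=(X\ot X^*)\bt(Y\ot Y^*)$ lands in the $(1,1)$-component $(\E_1\cap\E')\bt(\F_1\cap\F')$ --- here the constraints $X\in(\E_1')_\phi$, $Y\in(\F_1')_{\phi^{-1}}$, which you discarded, are exactly what is used. The paper then argues that this trivial component, being contained in $\E_1\bt\F_1$, is trivial, whence $Z\ot Z^*\cong\be$ and $Z$ is invertible. So the content of the corollary is concentrated in controlling $\E_1\cap\E'$ and $\F_1\cap\F'$, a point your proof never touches; without some analysis of the trivial component (or an equivalent use of the centralizer conditions), the pointedness claim does not follow.
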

\begin{proof}
The formula for homogeneous components follows from Corollary~\ref{corollary centralizer of SEFG}.
The trivial component of the grading of $\Z_{sym}(\B(\F,\, \E,\, G))$ is contained in $\E_1\bt \F_1$ and so it is 
equivalent to $\Vec$. 
Hence, $\Z_{sym}(\B(\F,\, \E,\, G))$ is pointed.
\end{proof}

\begin{remark}
Corollary~\ref{sym center pointed} means that  if $\B$ has a non-degenerate braiding  then other braidings on $\B$
cannot be ``too symmetric" as the symmetric center remains pointed.  Conversely, if $\B$ has a braiding such that
$\Z_{sym}(\B)$ is not pointed, then no non-degenerate braidings on $\B$ can exist.  In particular, $\Rep(G)$
for a non-abelian $G$ does not admit any non-degenerate braidings (equivalently, there are no modular category structures
on $\Rep(G)$). 
\end{remark}

\begin{proposition}
Let $\B$ be a fusion category that admits a non-degenerate braiding. Then all non-degenerate braidings on $\B$ correspond to 
data $(\E,\, \F,\, G)$ such that 
\begin{equation}
\B \cap \big( (\E_g \cap \F_\phi) \bt (\F_g \cap \E_{\phi^{-1}}) \big) =
\begin{cases}
\Vec & \mbox{ if } g=1, \phi=1 \\
0      & \mbox{ otherwise},
\end{cases}
\end{equation}
where  we use identification  $\B=  \E \bt_G \F \subset \E \bt \F$. 
\end{proposition}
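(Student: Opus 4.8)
The plan is to reinterpret the non-degeneracy condition on a braiding in terms of the symmetric center and then invoke Corollary~\ref{sym center pointed}. Recall from Theorem~\ref{braidings on non-degenerate} that every braiding on $\B$ arises from a datum $(\E,\, \F,\, G)$ with $\E\vee\F=\B$, $\E\cap\F$ pointed, and $h_\E+h_\F:\O(\E\cap\F)\to G$ an isomorphism. The braiding is non-degenerate precisely when its symmetric center $\Z_{sym}(\B(\F,\,\E,\,G))$ is trivial, i.e.\ equivalent to $\Vec$. So the entire statement is equivalent to writing out when $\Z_{sym}=\Vec$, and for that the explicit graded description in Corollary~\ref{sym center pointed} is exactly what I need.

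First I would write down the homogeneous components from Corollary~\ref{sym center pointed}:
\[
\Z_{sym}(\B(\F,\,\E,\,G))_{(g,\phi)} \cong (\E_g \cap (\E_1)'_\phi) \bt (\F_g \cap (\F_1)'_{\phi^{-1}}).
\]
Here the subscript $\phi$ refers to the $\widehat{G}$-grading on $\E_1'$ (respectively $\F_1'$) from Proposition~\ref{centralizer of trivial component}, under which the trivial component is $\E'$ (respectively $\F'$). The non-degeneracy of the braiding $\B(\F,\,\E,\,G)$ is the condition that this entire graded sum collapses to $\Vec$, concentrated in the unit component $(g,\phi)=(1,1)$, and vanishes in every other component. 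Matching this against the statement, I would identify the notation in the proposition: the subscript $\phi$ in $\E_\phi$ and $\F_\phi$ is meant as the $\widehat{G}$-grading index on the centralizers $\E_1'$ and $\F_1'$, so that $\E_g\cap\F_\phi$ should be read as $\E_g\cap(\E_1')_\phi$ and similarly $\F_g\cap\E_{\phi^{-1}}$ as $\F_g\cap(\F_1')_{\phi^{-1}}$, once the roles of the two factors are correctly tracked. After this identification the two-sided condition on $\B\cap\bigl((\E_g\cap\F_\phi)\bt(\F_g\cap\E_{\phi^{-1}})\bigr)$ is literally the component-by-component vanishing of $\Z_{sym}$.

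The main steps, then, are: (i) recall that non-degeneracy of a braided fusion category is equivalent to the triviality of its symmetric center; (ii) apply Corollary~\ref{sym center pointed} to get the graded decomposition of $\Z_{sym}(\B(\F,\,\E,\,G))$ indexed by $(g,\phi)\in G\times\widehat{G}$; (iii) observe that under the identification $\B=\E\bt_G\F\subset\E\bt\F$, intersecting a homogeneous piece with $\B$ recovers exactly the expression displayed in the proposition, since only the $g$-components paired diagonally survive the fiber-product restriction; and (iv) conclude that demanding this intersection be $\Vec$ for $(g,\phi)=(1,1)$ and $0$ otherwise is precisely the statement that $\Z_{sym}=\Vec$, i.e.\ that the braiding is non-degenerate.

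The step I expect to be the genuine obstacle is (iii): reconciling the notation of the proposition with that of Corollary~\ref{sym center pointed}. The proposition writes $\E_g\cap\F_\phi$ and $\F_g\cap\E_{\phi^{-1}}$, which on its face mixes a $G$-grading index $g$ with a $\widehat{G}$-grading index $\phi$ on what look like the original categories $\E,\F$ rather than their centralizers. I would need to verify carefully that these are shorthand for intersections with the graded centralizer components $(\E_1')_\phi$ and $(\F_1')_{\phi^{-1}}$ appearing in the corollary, and that the operation $\B\cap(-)$ on the product $\E\bt\F$ correctly selects the diagonal $G$-component so that the surviving object is $(\E_g\cap(\E_1')_\phi)\bt(\F_g\cap(\F_1')_{\phi^{-1}})$. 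Once the bookkeeping of the two gradings and the fiber-product intersection is made precise, the equivalence with non-degeneracy is immediate from the cited corollary.
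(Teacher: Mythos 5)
Your proposal is correct and takes essentially the same route as the paper: the paper's entire proof is the single line ``Follows Corollary~\ref{sym center pointed}'', i.e., exactly your steps (i)--(iv) — non-degeneracy of $\B(\F,\,\E,\,G)$ means $\Z_{sym}(\B(\F,\,\E,\,G))=\Vec$, which holds precisely when every $(g,\phi)$-component of the graded decomposition from the corollary vanishes except the unit component. Your reading of the shorthand $\E_g\cap\F_\phi$ and $\F_g\cap\E_{\phi^{-1}}$ as intersections with the $\widehat{G}$-graded components of the centralizers $\E_1'$ and $\F_1'$ (as in Proposition~\ref{centralizer of trivial component}) is the right reconciliation of the proposition's notation with Corollary~\ref{sym center pointed}.
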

\begin{proof}
Follows  Corollary~\ref{sym center pointed}. 
\end{proof}

\subsection{Braidings on unpointed categories}

Let $\B$ be a fusion category with non-degenerate braiding. Suppose that  $\B_{pt}=\Vec$,
i.e., $\B$ is {\em unpoitned}.
It was shown in \cite{Mu} that in this case there is factorization of $\B$ into a direct product of prime subcategories:
\begin{equation}
\label{prime factorization}
\B = \B_1 \bt \cdots \bt \B_n,
\end{equation}
which is unique up to a permutation of factors.

\begin{corollary}
Let $\B$ be a fusion category such that $\B_{pt}=\Vec$. Suppose that
$\B$ admits a non-degenerate braiding.  Let  \eqref{prime factorization}  be the prime factorization of $\B$.
Then all braidings on $\B$ are non-degenerate and there are
precisely $2^n$ such braidings. The corresponding braided fusion categories are
\begin{equation}
\label{prime factorization rev}
\B = \B_1^\pm \bt \cdots \bt \B_n^\pm,
\end{equation}
where $\B_i^+=\B_i$ and $\B_i^-=\B_i^\rev$ for $i=1,\dots,n$. 
\end{corollary}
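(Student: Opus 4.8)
The plan is to reduce this corollary to the classification in Theorem~\ref{braidings on non-degenerate} together with the primeness and uniqueness of the factorization~\eqref{prime factorization}. Since $\B_{pt}=\Vec$, any fusion subcategory $\E\subset\B$ also satisfies $\E_{pt}=\Vec$, so $\E$ has no nontrivial invertible objects and hence no nontrivial faithful gradings: the universal grading group $U(\E)$ must be trivial because $U(\E)$ is a quotient of the group of invertibles of $\E$ in the pointed case (more precisely, the homomorphism $h_\E:\O(\E_{pt})\to G$ has trivial domain). First I would observe that this forces the group $G$ in every subcategory datum $(\E,\,\F,\,G)$ for $\B\bt\B^\rev$ to be trivial. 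Indeed, Theorem~\ref{braidings on non-degenerate} requires $h_\E+h_\F:\O(\E\cap\F)\to G$ to be an isomorphism, but $\E\cap\F$ is pointed and a subcategory of $\B$, so $\O(\E\cap\F)\subset\O(\B_{pt})=\{\be\}$, whence $|G|=1$.

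With $G$ trivial the datum degenerates to a pair $(\E,\,\F)$ with $\E\vee\F=\B$ and $\E\cap\F=\Vec$, i.e.\ $\B=\E\bt\F$ is a factorization of $\B$ into a direct (Deligne) product of two fusion subcategories. The next step is to invoke the uniqueness of the prime factorization~\eqref{prime factorization}: any such decomposition $\B=\E\bt\F$ must group the prime factors $\B_1,\dots,\B_n$ into two complementary subsets, so $\E=\bt_{i\in I}\B_i$ and $\F=\bt_{i\notin I}\B_i$ for some $I\subseteq\{1,\dots,n\}$. Conversely every subset $I$ yields a valid datum, and the braiding attached to $(\E,\,\F,\,1)$ is, by Remark~\ref{explicit braiding on non-degenerate} with $G$ trivial, $\tilde c_{X_1\bt Y_1,X_2\bt Y_2}=c_{X_1,X_2}\bt c^{-1}_{Y_2,Y_1}$; on the factors indexed by $I$ this is the original braiding and on those indexed by the complement it is the reverse braiding. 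This gives exactly $\B=\B_1^\pm\bt\cdots\bt\B_n^\pm$ with the sign $+$ for $i\in I$ and $-$ otherwise, and the count of such braidings is $2^n$.

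Finally I would verify that all of these braidings are non-degenerate. Since $\B$ is non-degenerate and reversing the braiding preserves non-degeneracy (replacing $c$ by $c^\rev$ sends $\Z_{sym}$ to itself), each factor $\B_i^\pm$ is non-degenerate; a Deligne product of non-degenerate braided categories is non-degenerate, so $\B_1^\pm\bt\cdots\bt\B_n^\pm$ is non-degenerate for every choice of signs. Alternatively one reads this off Corollary~\ref{sym center pointed}: with $G$ trivial the grading of the symmetric center is concentrated in the single component $(1,1)$, and that component equals $\Z_{sym}(\B)\bt\Z_{sym}(\B)=\Vec$.

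The main obstacle is the step forcing $G$ to be trivial and then matching the resulting binary factorizations $\B=\E\bt\F$ to subsets of the prime factors. The first part is essentially bookkeeping about gradings in the pointed-trivial case; the genuinely load-bearing input is the uniqueness (up to permutation) of the prime decomposition from~\cite{Mu}, which is what guarantees that the only factorizations are the $2^n$ obtained by distributing the $\B_i$ between $\E$ and $\F$, and hence that no braidings are missed or double-counted.
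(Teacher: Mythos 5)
Your reduction to $G=1$ is sound and matches the paper: since $\E\cap\F$ is pointed and $h_\E+h_\F:\O(\E\cap\F)\to G$ must be an isomorphism, $\O(\E\cap\F)\subset\O(\B_{pt})=\{\be\}$ forces $G$ to be trivial, whence $\B\cong\E\bt\F$ as a fusion category. (Your preliminary remark that $\B_{pt}=\Vec$ forces every fusion subcategory $\E$ to have trivial universal grading is false in general: $\Rep(G)$ for a perfect group $G$ with nontrivial center is unpointed while $U(\Rep(G))\cong\widehat{Z(G)}\neq 1$. The implication ``unpointed $\Rightarrow$ $U$ trivial'' is special to non-degenerate braided categories, via the canonical pairing \eqref{canonical thing}; fortunately you do not actually use this remark, since your real argument goes through $\O(\E\cap\F)$.)

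The genuine gap is the next step, where you ``invoke the uniqueness of the prime factorization'' to conclude that any decomposition $\B=\E\bt\F$ groups the prime factors. M\"uger's uniqueness theorem concerns factorizations of $\B$ into prime \emph{braided} (non-degenerate) subcategories that centralize each other with respect to the given braiding $c$; at this point in your argument, $\B\cong\E\bt\F$ is only an equivalence of fusion categories, and you know neither that $\E$ and $\F$ centralize each other under the original braiding nor that they are non-degenerate, so neither \cite{Mu} nor \cite[Section 2.2]{DMNO} applies yet. Closing exactly this gap is the main content of the paper's proof: for $X\in\O(\E)$, $Y\in\O(\F)$ the object $X\ot Y$ is simple, so $c_{Y,X}\circ c_{X,Y}=\lambda_{X,Y}\,\id_{X\ot Y}$; the assignment $X\bt Y\mapsto\lambda_{X,Y}$ defines a grading of $\B$, which must be trivial because $U(\B)\cong\widehat{\O(\B_{pt})}$ is trivial (this is where non-degeneracy of the original braiding and $\B_{pt}=\Vec$ are used). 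Hence $\E$ and $\F$ centralize each other, $\F=\E'$, both are non-degenerate, and only then does \cite[Section 2.2]{DMNO} identify $\E$ and $\F$ with products of complementary subsets of the $\B_i$ in \eqref{prime factorization}. Relatedly, your alternative non-degeneracy check misreads Corollary~\ref{sym center pointed}: with $G=1$ the $(1,1)$-component of the symmetric center is $(\E\cap\E')\bt(\F\cap\F')$, not $\Z_{sym}(\B)\bt\Z_{sym}(\B)$, so it vanishes only once non-degeneracy of $\E$ and $\F$ is already known; a correct shortcut from that corollary is instead that $\Z_{sym}$ is pointed while $\B$ is unpointed, forcing $\Z_{sym}=\Vec$. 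Your direct check (a Deligne product of non-degenerate factors is non-degenerate) and the count of $2^n$ are fine once the grouping step is properly established.
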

\begin{proof}
Since, $\B$ is unpointed, according to Remark~\ref{explicit braiding on non-degenerate} we have $\B \cong \E \bt \F$ as a fusion category.
We claim that $\E$ and $\F$ centralize each other  with respect to the original braiding of $\B$. Indeed, for all $X\in \O(\E)$ and $X\in \O(\F)$
the object $X \bt Y$ is simple and, therefore,
\[
c_{Y,X} \circ c_{X,Y} = \lambda_{X,Y} \id_{X\bt Y},\qquad \lambda_{X,Y} \in k^\times. 
\]
It follows that the map
\[
\O(\E \bt \F) \to k^\times : X \bt Y \mapsto \lambda_{X,Y}
\]
is a grading of $\E \bt \F$ .  But $U(\E \bt \F)\cong \widehat{\O((\E \bt \F)_{pt})}$ is trivial, and so $\lambda_{X,Y}=1$ for all $X,\,Y$,
which proves the claim. It follows that $\E$ and $\F$ must be non-degenerate subcategories  of $\B$. By \cite[Section 2.2]{DMNO}
there is a subset $J\subset \{1,\,\dots,n\}$ such that $\E= \bigoplus_{i\in J}\, \B_i$ and $\F= \bigoplus_{i\not\in J}\, \B_i$. This implies the statement.
\end{proof}

\subsection{Gauging}

Let $\B$ be a non-degenerate braided fusion category with a braiding $c_{X,Y}: X\ot Y \to Y\ot X$. 
A {\em gauging} of $\B$ is  the following procedure of changing the braiding by a bilinear form
$b: U(\B)\times U(\B)\to k^\times$.  A new braiding $\tilde{c}_{X,Y}: X\ot Y \to Y\ot X$ is defined by
\[
\tilde{c}_{X,Y} = b(\deg(X),\, \deg(Y))\,c_{X,Y},
\]
for all $X,Y\in \O(\B)$, where $\deg$ denotes the degree of a simple object with respect to the universal grading.
By definition, gaugings of a given braiding form a torsor over the group of bilinear forms on $U(\B)$.

The corresponding embedding $\B\to \Z(\B)=\B \bt \B^\rev$ is given by $X\mapsto (X\ot V_X )\bt V_X^*$ for all $X\in \O(\B)$, where 
$V_X\in \O(\B_{pt})$ is determined by the condition
\[
\langle V_X, \, y \rangle = b(\deg(X),\, y),\quad  \mbox{for all } y\in U(\B). 
\]
Here $\langle \, , \, \rangle : \O(\B_{pt}) \times \widehat{U(\B)}\to k^\times$ denotes the canonical pairing \eqref{canonical thing}.

In this situation  $\E=\B$, $\F_1 = \Vec$ (so that $\F\subset \B_{pt}$), and $G\subset  \O(\B_{pt})$ is the image of the homomorphism $U(\B)\to \O(\B_{pt}): X \mapsto V_X$. 

Conversely, if a  datum $(\E,\, \F,\, G)$  from Theorem~\ref{braidings on non-degenerate} is such that $\E=\B$ and $\F_1 =\Vec$
(respectively,  $\F=\B$ and $\E_1=\Vec$) then the corresponding braiding is a gauging of the original braiding of $\B$
(respectively,  of the reverse braiding). 

In the next two examples for a finite group $G$ we denote by $\Z(G)$ the center of $\Vec_G$.

\begin{example}
(This result was independently obtained by Costel-Gabriel Bontea using different techniques).
Let $\B:= \Z(S_n),\, n\geq 3$, where $S_n$ denotes the symmetric group on $n$ symbols.  
Observe that $\B$ has a unique maximal fusion subcategory $\B_{ad}$,
which is the subcategory of vector bundles supported on the alternating subgroup $A_n$. Thus, in any presentation
$\B=\E\bt_G \F$ either $\E=\B$ or $\F=\B$. Since $U(\B)=\mathbb{Z}_2$ we must have $G=\{1\}$ or  $G=\mathbb{Z}_2$.
If $\E=\B$ then $\F=\Vec$ or $\F=\B_{pt}$ (note that $\FPdim(\B_{pt})=2$). The first possibility  gives the standard braiding
of $\B$, while the second gives its gauging with respect to the $\mathbb{Z}_2$-grading of $\B$. The situation when $\F=\B$
is completely similar.

Hence, $\B$ has $4$ different braidings: the usual braiding of the center, its reverse, and their gaugings with respect to the 
$\mathbb{Z}_2$-grading of $\Z({S_n})$.  The corresponding data are: $(\B,\, \Vec,\, 1)$, $(\Vec,\,\B,\, 1)$,
$(\B,\, \B_{pt},\, \mathbb{Z}_2)$, and $(\B_{pt},\,\B,\, \mathbb{Z}_2)$, respectively.
\end{example}

\begin{example}
Let $G$ be a non-abelian group of order $8$, i.e., $G$ is either the dihedral group or the quaternion group.
Let $\B=\Z(G)$.  We claim that every braiding of $\B$ is a gauging of either its standard braiding or its reverse.
The structure of $\Z(G)$ was studied in detail by various authors including \cite{GMN, MN}. One has $U(\B)= \mathbb{Z}_2^3$
(so in particular, the standard braiding of $\B$ has $2^9=512$ different gaugings!)
The trivial component of the universal grading is $\B_{pt}=\B_{ad}$, this is a pointed Lagrangian subcategory of the Frobenius-Perron dimension $8$.
Furthermore, for any non-pointed fusion subcategory $\E\subset \B$ its adjoint subcategory $\E_{ad}$ contains at least $4$ invertible objects. 
In any presentation $\B=\E\bt_G \F$ satisfying the conditions of Theorem~\ref{braidings on non-degenerate}
one of the subcategories $\E$, $\F$ must be non-pointed and and another must be pointed. Indeed, if both are pointed then so is $\B$, a contradiction.
If both are non-pointed then $\FPdim(\E_1\cap\F_1)\geq 2$, a contradiction.

Suppose that $\E$ is non-pointed. Then $\F$ is a pointed fusion subcategory of $\B$ with $\FPdim(\B)=1,\,2,\,4 \text{ or } 8$. 

If $\FPdim(\F)=1$ then we get the standard braiding of $\B$.

If $\FPdim(\F)=2$ then either $G=\mathbb{Z}_2$ and the corresponding braiding is a gauging of the standard one, or
$G=\{1\}$ and   $\B =\E\bt \F$. The latter is impossible since in this case $\FPdim(\E)=32$  and $\E$ contains $\B_{pt}$ and, hence, $\F$.

If $\FPdim(\F)=4$  then either $G=\mathbb{Z}_4$ the corresponding braiding is a gauging of the standard one, or
$G=\mathbb{Z}_2$  and so $\FPdim(\E)=32$ and $\FPdim(\E_1\cap\F_1)=2$, a contradiction, or $G=\{1\}$ and   $\B =\E\bt \F$
which is impossible.

Finally, if $\FPdim(\F)=8$ then we must have $G=\mathbb{Z}_2^3$ since otherwise we again have $\FPdim(\E_1\cap\F_1)\geq 2$,
which contradicts conditions of  Theorem~\ref{braidings on non-degenerate}. So in this case $\F_1=\Vec$ and the grading of $\B$
is a gauging of the standard one.

Thus, if $\E$ is non-pointed then the corresponding grading is always a gauging of the standard one. Switching $\E$ and $\F$ will give
gaugings of the reverse brading. 
\end{example}


\section{Braidings on group-theoretical categories}
\label{sect 5}

Let $G$ be a finite group. Let us denote $\C(G)=\Vec_G$ and $\Z(G):=\Z(\Vec_G)=\C(G)^G$.

\subsection{Lagrangian algebras in the center of $\Vec_G$}

It is well known that  $\Z(G)$ is identified with the category of $G$-equivariant vector bundles on $G$. 
The isomorphism classes of simple objects of $\Z(G)$ are parameterized by pairs $(K,\, \pi)$, where $K\subset G $ is a conjugacy class
and $\pi$ is the isomorphism class an irreducible representation of the centralizer $C_G(g_K)$ of  $g_K\in K$.  The corresponding 
object $V(K,\, \pi) =\oplus_{g\in K}\, V(K,\, \pi)_g$ is the vector bundle supported on $K$ whose  equivariant structure restricted to $C_G(g_K)$
acts by $\pi$ on $V(K,\, \pi)_{g_K}$. 

Recall that  equivalence classes of indecomposable  $\C(G)$-module categories are parameterized by 
conjugacy classes of pairs
$(H,\, \mu)$, where $H$ is a subgroup of $G$ and $\mu\in H^2(G,\, k^\times)$. 
The module category $\M(H,\, \mu)$ corresponding to $(H,\, \mu)$ is the category of modules
over the twisted group algebra $k_\mu[H]$ in $\C(G)$.  It can be identified with a certain
category of  $H$-invariant vector bundles on $G$. 

 Let $\Z(G;\, H)= \Vec_G^H$ be the category of $H$-equivariant objects in $\Vec_G$. We have $\Z(H)\subset \Z(G;\, H)$.  There is an obvious
forgetful functor $F_H: \Z(G)\to \Z(G;\, H)$. Let $I_H: \Z(G;\, H) \to \Z(G)$ denote its adjoint.  

The following construction was given in \cite{D2}.
The twisted group algebra $k_\mu[H]$ is a Lagrangian algebra in $\Z(H)$ with the obvious grading
and the $H$-equivariant  structure given by
\begin{equation}
\label{eqstronkmu}
k_\mu[H] \to g k_\mu[H] g^{-1} :  x \mapsto \eps_g(x) gxg^{-1}, \quad \mbox{where\,  }
\eps_g(x) =\frac{\mu(gxg^{-1},\, g)}{\mu(g,\,x)},\qquad g,x\in H.
\end{equation}
Here we abuse notation and identify the cohomology class $\mu$ with a $2$-cocycle representing it. 
Note that 
\begin{equation}
\label{d2eps}
\frac{\eps_g(x)\eps_g(y)}{\eps_g(xy)} =\frac{\mu(gxg^{-1},\,gyg^{-1})}{\mu(x,\,y)},\qquad g,\,x,\,y\in H. 
\end{equation}
In particular, $\eps_{g_K}$ restricts to a linear character of $C_G(g_K)$. 
As an object of $\Z(H)$, 
\begin{equation}
\label{kmuH as object}
k_\mu[H] \cong \bigoplus_K\, V(K,\, \eps_{g_K}).
\end{equation}
Let $A(H,\, \mu) \in \Z(G)$ be the Lagrangian algebra  corresponding to  the $\C(G)$-module category  $\M(H,\, \mu)$.
It was shown in \cite[Section 3.4]{D2} that 
\begin{equation}
\label{AH-mu = I-muH}
A(H,\, \mu)  \cong I_H(k_\mu[H]).
\end{equation}
Here $k_\mu[H]\in \Z(H)$ is considered as an algebra in $\Z(G;\, H)$. 

\subsection{Transversality criterion and parameterization of braidings}

Tensor subcategories of $\Z(G)$ were classified in \cite{NNW}. They are in bijection with 
triples $(L,\,M,\, B)$, where 
\begin{enumerate}
\item[(T1)] $L$ and $M$ are normal subgroups of $G$ commuting with each other, 
\item[(T2)] $B:L\times M \to k^\times$ is a $G$-invariant bicharacter.  
\end{enumerate}
The corresponding subcategory
$\mathcal{S}_G(L,\,M,\, B)$ consists of vector bundles supported on $L$ and such that the restriction of their
$G$-equivariant structure  on $M$ is the scalar multiplication by 
$B(g,\, -)$ for all $g\in L$.  Equivalently, simple objects of $\mathcal{S}_G(L,\,M,\, B)$ are objects $V(K,\, \pi)$,
where $K$ is a conjugacy class contained in $L$ and $\pi$ is contained in
the induced representation  $\Ind_{M}^{C(g_K)}\, B(g_K,\,-)$. 
We have
\begin{equation}
\label{FP dim SG}
\FPdim(\mathcal{S}_G(L,\,M,\, B)) = |L| [G:M].
\end{equation}
We denote by  $\widehat{B}: L \to \widehat{M}$ the group homomorphism associated to $B$. 

Let $\mu$ be a $2$-cocycle on $G$ with values in $k^\times$.  The map $\Alt(\mu): C_G(M) \times M \to k^\times$ 
defined by 
\begin{equation}
\label{epsmu}
\Alt(\mu) (g,\, x) =\frac{\mu(x,\,g)}{\mu(g,\, x)},\qquad g\in C_G(M),\,x\in M.
\end{equation}
is bimultiplicative and $G$-invariant. We have 
\[
\Alt(\mu) (g,\, x)=\eps_g(x)
\]
for all  $g\in C_G(M),\,x\in M$.

\begin{lemma}
\label{SKLB transversal to k-muH}
The subcategory $\mathcal{S}_G(L,M,B) \subset \Z(G)$ is transversal to the Lagrangian
algebra $k_\mu[G]$ if and only if $\frac{B} {\Alt(\mu)}: L \times M \to k^\times$ is non-degenerate
in the second argument, i.e.,  for all $g\in L,\, g\neq 1,$ there is $x\in M$ such that   
\begin{equation}
\label{inequality}
\frac{B} {\Alt(\mu)} \left(g,\, x \right) \neq 1.
\end{equation}
\end{lemma}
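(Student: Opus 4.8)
The plan is to apply the transversality criterion in the form recorded right after its definition: $\mathcal{S}_G(L,\,M,\, B)$ is transversal to $k_\mu[G]$ if and only if $\Hom_{\Z(G)}(X,\, k_\mu[G]) = 0$ for every non-identity simple object $X \in \mathcal{S}_G(L,\,M,\, B)$. First I would invoke the decomposition \eqref{kmuH as object} with $H = G$, so that $k_\mu[G] \cong \bigoplus_K V(K,\, \eps_{g_K})$ as an object of $\Z(G)$, the sum running over all conjugacy classes $K$ of $G$. Since each $\eps_{g_K}$ is a linear character of $C_G(g_K)$, every summand $V(K,\, \eps_{g_K})$ is simple, and these are pairwise non-isomorphic (being supported on distinct conjugacy classes). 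Hence by Schur's lemma, for a simple $X = V(K,\, \pi)$ one has $\Hom_{\Z(G)}(X,\, k_\mu[G]) \neq 0$ precisely when $\pi \cong \eps_{g_K}$, i.e.\ when $X \cong V(K,\, \eps_{g_K})$. Transversality therefore reduces to the assertion that no non-identity conjugacy class $K \subset L$ satisfies $V(K,\, \eps_{g_K}) \in \mathcal{S}_G(L,\,M,\, B)$.

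The second step is to decide when $V(K,\, \eps_{g_K})$ belongs to $\mathcal{S}_G(L,\,M,\, B)$. By the description of simple objects of $\mathcal{S}_G(L,\,M,\, B)$ recalled above, this happens exactly when $K \subset L$ and the character $\eps_{g_K}$ is contained in $\Ind_M^{C_G(g_K)} B(g_K,\,-)$. Because $L$ and $M$ commute (condition (T1)), we have $M \subseteq C_G(g_K)$ for $g_K \in L$, so Frobenius reciprocity gives
\[
\dim \Hom_{C_G(g_K)}\big(\eps_{g_K},\, \Ind_M^{C_G(g_K)} B(g_K,\,-)\big) = \dim \Hom_M\big(\eps_{g_K}|_M,\, B(g_K,\,-)\big),
\]
which is nonzero if and only if $\eps_{g_K}|_M = B(g_K,\,-)$ as characters of $M$. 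Since $g_K \in L \subseteq C_G(M)$, the identity $\Alt(\mu)(g_K,\, x) = \eps_{g_K}(x)$ for $x\in M$ (the displayed relation following \eqref{epsmu}, with $\eps_{g_K}$ as in \eqref{eqstronkmu}) lets me rewrite this equality as $\tfrac{B}{\Alt(\mu)}(g_K,\, x) = 1$ for all $x \in M$. Thus $V(K,\, \eps_{g_K}) \in \mathcal{S}_G(L,\,M,\, B)$ if and only if $\tfrac{B}{\Alt(\mu)}(g_K,\, -)$ is trivial on $M$.

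Combining the two steps, transversality fails exactly when some non-identity $g \in L$ has $\tfrac{B}{\Alt(\mu)}(g,\, -) \equiv 1$ on $M$, which is precisely the negation of \eqref{inequality}; equivalently, transversality holds if and only if for every $g \in L \setminus \{1\}$ there is $x \in M$ with $\tfrac{B}{\Alt(\mu)}(g,\, x) \neq 1$. The only point requiring care is the passage between conjugacy classes $K \subset L$ and individual elements $g \in L$ in the final quantifier: this is legitimate because $B$ and $\Alt(\mu)$ are both $G$-invariant, so $\tfrac{B}{\Alt(\mu)}(g,\, -)$ depends only on the conjugacy class of $g$, and since $L$ is normal every $g \in L$ lies in a class contained in $L$. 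I expect the main (and only minor) obstacle to be bookkeeping the Frobenius reciprocity step while keeping track of the hypothesis $M \subseteq C_G(g_K)$, which is exactly what makes both the restriction $\eps_{g_K}|_M$ and the induced representation well defined.
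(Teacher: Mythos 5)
Your proposal is correct and takes essentially the same route as the paper's proof: decompose $k_\mu[G]$ via \eqref{kmuH as object}, reduce transversality to $\eps_{g_K}$ not appearing in $\Ind_M^{C_G(g_K)}B(g_K,\,-)$ for non-identity classes $K\subset L$, apply Frobenius reciprocity to get $\eps_{g_K}|_M \neq B(g_K,\,-)$, and pass from class representatives to all of $L$ using the $G$-invariance of $B$ and $\Alt(\mu)$. Your additional explicit justifications (Schur's lemma for the summands $V(K,\,\eps_{g_K})$, and $M\subseteq C_G(g_K)$ from condition (T1) making the restriction and induction well defined) merely spell out steps the paper leaves implicit.
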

\begin{proof}
By \eqref{kmuH as object}  the transversality is equivalent to the condition
\[
\Hom_{C_G(g_K)}(\eps_{g_K},\, \Ind_M^{C_G(g_K)} \widehat{B}(g_K)) =0
\]
for all non-identity conjugacy classes $K\subset L$.
By the Frobenius reciprocity this is equivalent to 
\[
\Hom_{M}(\eps_{g_K}|_M,\,  \widehat{B}(g_K)) =0, \qquad K\subset L,\, K\neq \{1\},
\]
i.e., $\eps_{g_K}|_M \neq \widehat{B}(g_K)$ for all non-identity $K$.
This condition means that for each $g_K$ with  $K\subset L$ ($K~\neq~\{1\}$) there is $x\in M$ such that
\[
\frac{\mu(x,\, g_K)}{\mu(g_K,\, x)} \neq B(g_K,\,x).
\]
Using the $G$-invariance of $B$ and $\Alt(\mu)$ we get the result. 
\end{proof}


\begin{theorem}
\label{GT braidings}
Braidings on $\C(G)_{\M(H,\,\mu)}^*$  are in bijection with triples $(L,\, M, B)$ 
satisfying  (T1),  (T2) and the following conditions:
\begin{enumerate}
\item[(i)]  $LH = MH = G$,  
\item[(ii)] the restriction of  $\frac{B} {\Alt(\mu)}$  on  $(L\cap H)\times (M\cap H)$ is non-degenerate. 
\end{enumerate}
\end{theorem}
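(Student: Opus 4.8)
The plan is to combine the criterion of Theorem~\ref{sections as subcategories} with the classification of tensor subcategories of $\Z(G)$ and the computation of Lemma~\ref{SKLB transversal to k-muH}. Write $\D := \C(G)_{\M(H,\,\mu)}^*$. By the correspondence recalled above between Lagrangian algebras and module categories, the algebra $A(H,\,\mu)$ yields a braided equivalence $\Z(\D)\cong \Z(G)$ carrying the canonical Lagrangian algebra $I_\D(\be)$ to $A(H,\,\mu)$. Hence Theorem~\ref{sections as subcategories} identifies braidings on $\D$ with fusion subcategories $\B\subset \Z(G)$ that are transversal to $A(H,\,\mu)$ and satisfy $\FPdim(\B)=\FPdim(\D)=|G|$. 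By \cite{NNW} every such $\B$ equals $\mathcal{S}_G(L,\,M,\,B)$ for a unique triple obeying (T1), (T2), and by \eqref{FP dim SG} the dimension condition reads $|L|\,[G:M]=|G|$, i.e. $|L|=|M|$. It therefore remains to show that, together with $|L|=|M|$, transversality of $\mathcal{S}_G(L,\,M,\,B)$ to $A(H,\,\mu)$ is equivalent to (i) and (ii).

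The key step is to reduce transversality in $\Z(G)$ to a computation inside $\Z(H)$ against $k_\mu[H]$. Using \eqref{AH-mu = I-muH} and the adjunction between $F_H:\Z(G)\to\Z(G;\,H)$ and $I_H$, for every $X\in\Z(G)$ one has
\[
\Hom_{\Z(G)}(X,\, A(H,\,\mu)) \cong \Hom_{\Z(G;\, H)}(F_H(X),\, k_\mu[H]).
\]
Since $k_\mu[H]$ is supported on $H$, only the part of $F_H(X)$ supported on $H$ contributes, and this part lies in $\Vec_H^H=\Z(H)$; write $R_H(X)\in\Z(H)$ for it (forget to $H$\nobreakdash-equivariance and restrict the support to $H$). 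Thus transversality of $\mathcal{S}_G(L,\,M,\,B)$ to $A(H,\,\mu)$ becomes $\Hom_{\Z(H)}(R_H(V(K,\,\pi)),\, k_\mu[H])=0$ for every nonidentity simple $V(K,\,\pi)\in\mathcal{S}_G(L,\,M,\,B)$, a condition of precisely the type treated in Lemma~\ref{SKLB transversal to k-muH}, now for the subgroup $H$ with restricted data $(L\cap H,\, M\cap H,\, B|)$.

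Next I would extract the two group-theoretic conditions. Consider first the objects supported at the identity, $V(\{1\},\,\pi)$ with $\pi\in\Rep(G/M)$ (equivalently $\pi\subset\Ind_M^G\widehat B(1)$): then $R_H(V(\{1\},\,\pi))=V(\{1\},\,\pi|_H)$, and the only summand of $k_\mu[H]$ over the identity is $\be$, so vanishing of the $\Hom$ for all nontrivial $\pi$ says exactly that no nontrivial representation of $G/M$ has $H$-fixed vectors, i.e. $MH=G$. Granting $MH=G$, for a nonidentity class $K\subset L$ meeting $H$ (pick $g_K\in K\cap H$) we have $M\subseteq C_G(g_K)$ because $g_K\in L$ commutes with $M$, whence $C_G(g_K)=M\,C_H(g_K)$; Mackey's formula then collapses $\mathrm{Res}_{C_H(g_K)}\Ind_M^{C_G(g_K)}\widehat B(g_K)$ to $\Ind_{M\cap H}^{C_H(g_K)}\big(\widehat B(g_K)|_{M\cap H}\big)$, and by Frobenius reciprocity the relevant $\Hom$ is nonzero exactly when $\widehat B(g_K)|_{M\cap H}=\eps_{g_K}|_{M\cap H}$, that is when $\frac{B}{\Alt(\mu)}(g_K,\, -)$ is trivial on $M\cap H$. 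Hence transversality is equivalent to $MH=G$ together with injectivity of $\frac{B}{\Alt(\mu)}:\, L\cap H\to\widehat{M\cap H}$.

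Finally I would run the dimension bookkeeping. From $MH=G$ we get $|M\cap H|=|M||H|/|G|$, and with $|L|=|M|$ the injection $\frac{B}{\Alt(\mu)}:L\cap H\hookrightarrow\widehat{M\cap H}$ forces $|L\cap H|\le|M\cap H|=|L||H|/|G|$, so $|LH|=|L||H|/|L\cap H|\ge|G|$ and therefore $LH=G$; equality then gives $|L\cap H|=|M\cap H|$, upgrading the injection to a nondegenerate pairing, which is (ii). Conversely, (i) and (ii) give $|L\cap H|=|M\cap H|$ and $|L|=[G:H]|L\cap H|=[G:H]|M\cap H|=|M|$ (so the $\FPdim$ condition holds), while $MH=G$ and the injectivity contained in (ii) yield transversality by the previous paragraph. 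The main obstacle is the middle step: justifying the support–adjunction reduction to $\Z(H)$ and, above all, the Mackey–Frobenius collapse that turns the a priori representation-theoretic nonvanishing of $\Hom_{C_H(g_K)}(\mathrm{Res}\,\pi,\, \eps_{g_K})$ into the clean bicharacter condition on $M\cap H$; once that is in place, everything reduces to the dimension count above.
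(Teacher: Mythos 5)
Your proposal is correct and follows essentially the same route as the paper: reduce via Theorem~\ref{sections as subcategories} and the classification of \cite{NNW} to transversality of $\mathcal{S}_G(L,\,M,\,B)$ against $A(H,\,\mu)=I_H(k_\mu[H])$, push the $\Hom$-spaces through the adjunction into $\Z(G;\,H)$, extract $MH=G$ from the identity-supported objects by Mackey, apply the character computation of Lemma~\ref{SKLB transversal to k-muH} on $(L\cap H)\times(M\cap H)$, and run the same dimension count to force $LH=G$, $|L\cap H|=|M\cap H|$, and non-degeneracy. The only difference is presentational: where the paper identifies $F_H(\mathcal{S}_G(L,\,M,\,B))\cap \Z(H)$ with $\mathcal{S}_H(L\cap H,\,M\cap H,\,B|_{(L\cap H)\times (M\cap H)})$ and then cites the lemma, you inline that step simple object by simple object via the explicit collapse $C_G(g_K)=M\,C_H(g_K)$ (valid since $M\subseteq C_G(g_K)$ and $MH=G$), which is a sound and slightly more self-contained rendering of the same argument.
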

\begin{proof}
By Theorem~\ref{sections as subcategories} braidings on $\C(G)_{\M(H,\,\mu)}^*$ 
are parameterized by fusion subcategories $\mathcal{S}_G(L,M,B) \subset \Z(G)$ of the Frobenius-Perron 
dimension $|G|$ transversal to $A(H,\, \mu)$. 

The above dimension condition is equivalent to $|L|=|M|$ by \eqref{FP dim SG}. 
Note that this condition follows from (i) and (ii).


In view of \eqref{AH-mu = I-muH} we see that a necessary condition for the above transversality
is that the restriction of the forgetful functor $F_H: \Z(G)\to \Z(G;\,H)$ to $\mathcal{S}_G(L,M,B)$ is injective.
The latter condition is equivalent to transversality of $\Rep(G/M)$ and the function algebra $\Fun(G/H,\, k)$
in $\Rep(G)$, in other words, to $\Hom_G(\Ind_M^G k^\times,\, \Ind_H^G k^\times) =k$. Here $k^\times$ denotes
the trivial module.  By the Mackey restriction formula the latter is equivalent to $MH=G$. 

If the above condition is satisfied, the transversality of
$\mathcal{S}_G(L,M,B)$ and $A(H,\, \mu)$ in $\Z(G)$ is equivalent to the transversality
of $F_H(\mathcal{S}_G(L,M,B))$ and $k_\mu[H]$ in $\Z(G;\,H)$.

In this case we have
\[
F_H(\mathcal{S}_G(L,M,B)) \cap \Z(H) =\mathcal{S}_H(L\cap H,\,M\cap H,\, B|_{(L\cap H)\times (M \cap H)}).
\]
Now we can apply  Lemma~\ref{SKLB transversal to k-muH} (with $G$ replaced by $H$). 
The transversality of the subcategory $\mathcal{S}_H(L\cap H,\,M\cap H,\, B|_{(L\cap H)\times (M \cap H)})$ and 
the algebra $k_\mu[H]$ is equivalent to the injectivity of the corresponding homomorphism  $L\cap H \to \widehat{M \cap H}$, 
whence $|L\cap H| \leq |M\cap H|$.  This implies
\[
|LH| = \frac{|L||H|}{|L\cap H|} \geq   \frac{|M||H|}{|M\cap H|} =|MH|=|G|,
\]
so that $LH=G$,  $|L\cap H|=|M\cap H|$ and, hence, 
$\frac{B}{\Alt(\mu)}|_{(L\cap H)\times (M \cap H)}$
is non-degenerate.
\end{proof}

\begin{example}
Let $G$ be a non-abelian group and let $H \subset  G$ be a subgroup such that the only normal subgroup
$N$ of $G$ such that $HN=G$  is  $G$ itself. Then $\C(G)^*_{\M(H,\, \mu)}$ does not admit a braiding.
In particular, if $G$ is simple non-abelian and $H\neq G$ then $\C(G)^*_{\M(H,\, \mu)}$ does not admit a  braiding.
\end{example}

\begin{remark}
\begin{enumerate}
\item[(i)] Masuoka \cite{Ma} showed that that certain self-dual non-commutative and non-cocommutative 
semisimple Hopf algebras of dimension $p^3$, where $p$ is an odd prime, admit no quasi-triangular structures, 
thus giving examples of group-theoretical fusion categories that do not admit any braiding. 
These examples, however, are not of the form considered in Theorem~\ref{GT braidings} (to obtain them
one has to generalize Theorem~\ref{GT braidings}  by replacing  $\Vec_G$ with $\Vec_G^\omega$ for a 
non-trivial $3$-cocycle $\omega$).
\item[(ii)] An absence of braidings on certain group-theoretical categories associated to exact factorizations of almost simple
groups was established by Natale \cite{N}.
\end{enumerate}
\end{remark}

\begin{example}
The category $\C(G)^*_{\M(G,\, 1)}$ is equivalent to $\Rep(G)$. In this case Theorem~\ref{GT braidings} says
that braidings on $\Rep(G)$ are in bijections of triples $(L,\,M,\, B)$, where $L$ and $M$ are normal  subgroups
of $G$ commuting with each other and $B:L\times M \to k^\times$ is a non-degenerate $G$-invariant bilinear form
(note that these conditions on $B$ imply that $L$ and $M$ must be Abelian).
This classification was obtained by Davydov  \cite{D1}.
\end{example}

\begin{example}
\label{Keilberg}
The category $\Z(G)$ is equivalent to $\C(G\times G^{\op} )^*_{\M(D,\, 1)}$, where $G^{\op}$ is the group with the opposite
multiplication and $D=\{ (g,\,g^{-1}) \mid g\in G\}$.  Braidings on this category are parameterized by triples $(L,\, M,\, B)$,
where $L,\, M$ are  normal subgroups of $G\times G^{\op}$ and $B: L\times M \to k^\times$ is a  $G\times G^{\op}$-invariant bilinear form 
such that the following conditions  are satisfied:
\begin{enumerate}
\item[(i)] $L$ and $M$ commute with each other, 
\item[(ii)] $LD=MD = G\times G^{op}$, and
\item[(iii)] the restriction of $B$ on $(L\cap D)\times(M \cap D)$ is non-degenerate. 
\end{enumerate}
The standard braiding of $\Z(G)$ corresponds to $L = G \times 1, \, M=1\times G^{op}$, and trivial $B$. 

This parameterization is an alternative to the description of quasitriangular structures on the Drinfeld double
of $G$ given by Keilberg \cite{K}.
\end{example}

\bibliographystyle{ams-alpha}

\end{document}